\newtheorem{thm}{Theorem}[section]
\newtheorem{pro}[thm]{Proposition}
\newtheorem{cor}[thm]{Corollary}
\newtheorem{rmk}[thm]{Remark}
\newcommand{\al}{\alpha}
\newcommand{\be}{\beta}
\newcommand{\ga}{\gamma}
\newcommand{\la}{{\lambda}}
\newcommand{\bbR}{\mathbb{R}}
\newcommand{\del}{\partial}
\newcommand{\Tr}{\mbox{Tr }}
\newcommand{\Lt}{L^{(3)}}
\newcommand{\Pf}{\mbox{Pf }}
\newcommand{\sgn}{\mbox{sgn }}
\newcommand{\dmrjdel}[1]{}
\title{The Non-Orientable Map Asymptotics Constant $p_g$}
\author[S.~R.~Carrell]{S.~R.~Carrell$^*$}
\thanks{
${\hspace{-1ex}}^*$Department of Combinatorics and Optimization,
                                                University of Waterloo, Waterloo, Ontario, Canada.;    \\
${\hspace{.35cm}}$ \texttt{srcarrell@uwaterloo.ca}}
\begin{document}

\begin{abstract}
Using the pfaffian structure of the generating series for locally orientable maps, we show that the generating series satsifies a nonlinear differential
equation called the BKP equation. Using this we are able to derive a cubic differential equation which is satisfied by the generating series for
locally orientable triangulations. As a result, we prove a conjecture of Garoufalidis and Mari\~no concerning the constant $p_g$ which appears in
asymptotic formulas for a variety of rooted maps on non-orientable surfaces. This allows one to determine the asymptotic expansion for $p_g$ up to
an unknown Stokes constant.
\end{abstract}

\maketitle

\section{Introduction}

A map is a connected graph embedded in a compact connected surface in such a way that the regions delimited by the graph, called faces, are homeomorphic to open discs.
Loops and multiple edges are allowed. A rooted map is one in which an angular sector incident to a vertex is distinguished, and the latter is called the root vertex.
The root edge is the edge encountered when traversing the distinguished angular sector clockwise around the root vertex. We say that a map is orientable, non-orientable
or locally orientable if the same is true of the underlying surface.

In \cite{BC86}, Bender and Canfield studied the number $T_g(n)$ of $n$-edged rooted maps on an orientable surface of type $g$ and the number $P_g(n)$ of $n$-edged rooted
maps on a non-orientable surface of type $g$. They showed that asymptotically the numbers behaved as
	\begin{align*}
		T_g(n)	&\sim t_g n^{5(g-1)/2} 12^n, \\
		P_g(n)  &\sim p_g n^{5(g-1)/2} 12^n, \mbox{ when } g > 0,
	\end{align*}
where $t_g$ and $p_g$ are constants which depend only on $g$. Unfortunately, determining the constants $t_g$ and $p_g$ proved to be very difficult. Later, Gao\cite{G93} 
showed that if $C$ denotes a class of rooted maps (for example, 2-connected, triangulations and $2d$-regular) and $M_g(C,n)$ is the number of maps in class $C$ which
are of type $g$ and which have $n$ edges then for many such classes,
	\[ M_g(C,n) \sim \al t_g (\be n)^{5(g-1)/2}\ga^n, \]
if the maps are orientable and
	\[ M_g(C,n) \sim \al p_g (\be n)^{5(g-1)/2}\ga^n, \]
if the maps are non-orientable. Here $\al, \be$ and $\ga$ depend on the class of maps considered.

Goulden and Jackson\cite{GJ08} showed that the number of rooted orientable triangulations satisfies a quadratic recurrence equation. Equivalently, this
implies that the generating series for rooted orientable triangulations satisfies a quadratic differential equation. Goulden and Jackson proved this using the
fact that the generating series for rooted maps with respect to vertex degrees satisfies a family of differential equations known as the KP hierarchy,
essentially coming from the fact that the generating series in question has a determinantal structure (the Schur function expansion has coefficients which
satisfy the Pl\"ucker relations). Using this quadratic differential equation and the fact that the class of triangulations has the asymptotic behaviour described
by Gao, it was shown by Bender, Richmond and Gao\cite{BRG08} that the map asymptotics constants $t_g$ satisfied a quadratic recurrence. It was then shown by
Garoufalidis, L\^e and Mari\~no\cite{GLM08} that the scaled generating series for the map asymptotics constant $t_g$ satisfied a quadratic
differential equation equivalent to the Painlev\'e I equation. In particular they showed that if $u_g = - 2^{g-2} \Gamma\left( \frac{5g-1}{2} \right) t_g$ then the series
	\[ u(z) = z^{1/2} \sum_{g=0}^\infty u_g z^{-5g/2}, \]
satisfies
	\[ u^2 - \frac{1}{6}u'' = z. \] 

By analogy to the orientable case and motivated by some results in mathematical physics concerning integrals over real symmetric matrices, Garoufalidis and Mari\~no\cite{GM10} 
conjecture that similar to the series $u(z)$ above, the scaled generating series for the non-orientable map asymptotics constants also satisfies a simple differential
equation. The main result of this paper is the following which appears as Conjecture 1 in Garoufalidis and Mari\~no\cite{GM10}.
\begin{thm}\label{p_gTheorem}
	Let
		\[ v_g = 2^{\frac{g-3}{2}} \Gamma\left( \frac{5g-1}{4} \right) p_{\frac{g+1}{2}}. \]
	The series
		\[ v(z) = z^{1/4}\sum_{g=0}^\infty v_g z^{-5g/4} \]
	satisfies the differential equation
		\[ 2v' - v^2 + 3u = 0, \]
	where the series $u$ is the same as the series $u$ above.
\end{thm}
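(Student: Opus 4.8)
The plan is to transport to the non-orientable setting the three-stage argument behind the Painlev\'e~I characterisation of $u$, replacing the determinantal (KP) structure throughout by the pfaffian (BKP) structure announced in the abstract. I would: (i) record the generating series $\Phi$ for locally orientable maps in the power-sum variables $p_1,p_2,\ldots$ marking vertex degrees, and use its expansion in zonal polynomials --- the real-symmetric-matrix analogue of the Schur expansion --- to identify $\Phi$ as a tau-function of the BKP hierarchy; (ii) specialise the lowest BKP equation to the locus of triangulations to obtain a cubic differential equation for the locally orientable triangulation series; (iii) feed Gao's\cite{G93} asymptotic forms into that equation and extract a recurrence for the $p_g$, coupled to the $t_g$; and (iv) recast the recurrence, under the stated gamma-normalisations, as $2v'-v^2+3u=0$.

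For step (ii) I would follow Goulden--Jackson\cite{GJ08}: introduce a variable marking face degrees, restrict to the slice on which every face is a triangle, and apply the degree-dilation operator, so that the infinitely many $p_i$-derivatives appearing in the BKP equation collapse onto the single triangulation variable. The decisive structural feature is that the lowest BKP equation is sixth order in $\del_{p_1}$, whereas the KP equation is only fourth order; consequently, once the equation is rewritten through the free energy $F=\log\Phi$, it carries a genuine cubic nonlinearity --- a term of the shape $(\del_{p_1}^2 F)^3$ --- in place of the merely quadratic $(\del_{p_1}^2 F)^2$ of KP. This is exactly why the triangulation equation is cubic rather than quadratic. Because the pfaffian is built from both orientable and non-orientable contributions, the cubic equation splits at leading order: its orientable reduction reproduces the Goulden--Jackson quadratic relation (equivalently Painlev\'e~I for $u$), while the residual non-orientable part, after the orientable relation is used, collapses to first order --- which is the structural reason the final equation for $v$ is a Riccati equation sourced by a linear $u$-term rather than a second-order equation.

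For steps (iii)--(iv), Gao\cite{G93} gives triangulation counts $\sim\al p_g(\be n)^{5(g-1)/2}\ga^n$ in the non-orientable case and $\sim\al t_g(\be n)^{5(g-1)/2}\ga^n$ in the orientable case, so I would substitute these ans\"atze into the coefficient recurrence of the cubic equation and retain only the dominant balance in $n$. The normalisations $v_g=2^{(g-3)/2}\Gamma(\tf{5g-1}{4})p_{(g+1)/2}$ and $u_g=-2^{g-2}\Gamma(\tf{5g-1}{2})t_g$ are engineered so that, through $\Gamma(s+1)=s\,\Gamma(s)$, multiplication by $n$ in the recurrence becomes the operator $z\tf{d}{dz}$ on the scaled series: the genus shift turns into the derivative $2v'$, the quadratic self-convolution of the $p_g$ turns into $v^2$, and the linear coupling to the $t_g$ turns into $3u$. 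Matching coefficients of $z$ order by order then yields $2v'-v^2+3u=0$, the lowest ($g=0$) coefficient fixing the constant of integration.

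I expect step (ii) to be the main obstacle. The reduction to a closed equation for triangulations was already delicate in the orientable case, and the BKP equation is both higher order and combinatorially harder to handle, since its pfaffian provenance interleaves orientable and non-orientable genera within each term; tracking exactly which genera feed into the cubic, quadratic and linear pieces is the crux. A secondary difficulty is making the asymptotic substitution of step (iii) rigorous --- confirming that the subdominant terms of Gao's expansion do not pollute the leading recurrence --- which calls for error estimates uniform in $g$.
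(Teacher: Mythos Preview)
Your overall architecture---BKP for the locally orientable series, specialise to triangulations, extract a recurrence on asymptotic constants, repackage as an ODE---matches the paper's, but two of the load-bearing steps are misidentified, and a third essential ingredient is missing.

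\textbf{Where the cubic actually comes from.} Your claim that ``the lowest BKP equation is sixth order in $\del_{p_1}$'' and carries a term of the shape $(\del_{p_1}^2 F)^3$ is incorrect. The BKP equation used in the paper has exactly the same left-hand side as KP, namely $(\del_1^4+3\del_2^2-3\del_3\del_1)\log Z_N + 6(\del_1^2\log Z_N)^2$; what distinguishes it from KP is the \emph{right-hand side}, which is $3\,Z_{N+2}Z_{N-2}/Z_N^2$ rather than $0$. Writing $Y_N=\log Z_N$, the right side is $\tfrac34\exp(Y_{N+2}+Y_{N-2}-2Y_N)$. The paper applies the Euler operator $2D$ to both sides; on the right this produces $V\cdot\exp(\cdots)$ with $V=\Lt(x,y+2)+\Lt(x,y-2)-2\Lt(x,y)$, and the exponential is then eliminated using the undifferentiated BKP equation itself. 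That is what manufactures the cubic: a factor $V$ multiplying the original (quadratic) left-hand side. So the cubic is a shift-in-$N$ (Toda-type) phenomenon, not a higher-order $\del_1$ phenomenon; your step~(ii) as written would not produce the correct equation.

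\textbf{The missing rigidity step.} Your step~(iii) feeds Gao's asymptotic ans\"atze directly into the triangulation equation and takes a dominant balance. You correctly flag the danger that subdominant terms might pollute the leading recurrence---and the paper does \emph{not} proceed this way. Instead it proves a structure theorem: each $\Lt_g(z)$ is a finite linear combination $\sum_{i=0}^{5g-8}\mu_g(i)\psi_i$ in an explicit basis built from powers of $\eta=1-6s+6s^2$, where $s$ is the algebraic series defined by $z=\tfrac12 s(1-s)(1-2s)$. This is obtained by combining Gao's Tutte-type functional equation for near-triangulations with the cubic differential equation. Once one has this exact finite expansion, the leading coefficient $\al_g=\mu_g(5g-8)$ is a well-defined algebraic quantity, Darboux's theorem links $\al_g$ to $p_g$ and $t_g$ precisely, and the cubic equation yields a clean recurrence for $\be_g$ (a simple rescaling of $\al_g$). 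Without this structure theorem, the passage from the triangulation equation to a recurrence on $p_g$ is formal.

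\textbf{The final reduction.} The recurrence for $\be_g$ is not first-order; it is equivalent to a fifth-order nonlinear ODE for $\be(z)$. The Riccati equation emerges not by a ``splitting into orientable and residual non-orientable parts'' but by the substitution $\be(z)=-36\bigl(u(z)+v'(z)\bigr)$: one checks directly that if $u$ satisfies Painlev\'e~I and $v$ satisfies $2v'-v^2+3u=0$, then this $\be$ solves the fifth-order ODE, and matching $\be_0,\be_1$ pins down the solution uniquely. Your heuristic that the non-orientable part ``collapses to first order'' is morally right but does not describe the actual mechanism.
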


One of the main advantages to the differential equations satisfied by the generating series for the $t_g$ and $p_g$ constants is that they can be used to determine
the corresponding asymptotic behaviour. The differential equation for $u(z)$ was used by Garoufalidis, L\^e and Mari\~no\cite{GLM08} to determine the asymptotic
behaviour of $t_g$ to all orders. In particular, the following Theorem is proven in Appendix A of \cite{GLM08}.
\begin{thm}\label{t_gTheorem}
Suppose the series
	\[ u(z) = z^{1/2} \sum_{g=0}^\infty u_g z^{-5g/2} \]
satisfies
	\[ u^2 - \frac{1}{6}u'' = z. \]
Then as $g \to \infty$,
	\[ u_g \sim A^{-2g + \frac{1}{2}} \Gamma\left( 2g - \frac{1}{2} \right) \frac{S}{2\pi i} \left\{ 1 + \sum_{\ell = 1}^\infty \frac{\mu_\ell A^\ell}{\prod_{m=1}^\ell (2g - 1/2 - m)} \right\}, \]
where $A = \frac{8\sqrt{3}}{5}, S = -i\frac{3^{\frac{1}{4}}}{\sqrt{\pi}}$ and the $\mu_\ell$ are defined by the recursion relation
	\[ \mu_\ell = \frac{5}{16 \sqrt{3}\ell} \left\{ \frac{192}{25} \sum_{k=0}^{\ell - 1} \mu_k u_{(\ell - k + 1)/2} - \left( \ell - \frac{9}{10} \right)\left( \ell - \frac{1}{10} \right)\mu_{\ell-1} \right\}, \qquad \mu_0 = 1, \]
with the convention that $u_{n/2} = 0$ if $n$ is odd.
\end{thm}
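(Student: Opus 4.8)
The plan is to read off the large-$g$ behaviour of $u_g$ from the nearest singularity of the Borel transform of the formal series $u(z)$, which is in turn governed by an exponentially small solution of the Painlev\'e~I equation. The formal series $\sum_g u_g z^{-5g/2}$ is Gevrey-$1$ divergent, its coefficients growing like a ratio of a Gamma function to a power of a fixed ``action'' $A$; by the standard resurgence (dispersion) relation this growth, together with all of its $1/g$ corrections, is encoded in the trans-series correction $u^{(1)}(z)$ solving the equation linearised about $u(z)$. First I would construct $u^{(1)}$; then I would invoke the dispersion relation to convert its data into the claimed expansion; the only genuinely transcendental piece left over is the overall Stokes constant $S$.

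For the construction, write $u = u^{(0)} + \ep\, u^{(1)} + O(\ep^2)$ with $u^{(0)}$ the given series, and linearise $u^2 - \tfrac16 u'' = z$ to obtain $(u^{(1)})'' = 12\, u^{(0)} u^{(1)}$. A WKB ansatz $u^{(1)} \sim (12 u^{(0)})^{-1/4}\exp(-\int\sqrt{12 u^{(0)}}\,dz)$ with $u^{(0)} \sim z^{1/2}$ produces exactly the action $\tfrac{4\sqrt{12}}{5}z^{5/4} = \tfrac{8\sqrt3}{5}z^{5/4} = A z^{5/4}$ and the prefactor $z^{-1/8}$. I would then substitute $u^{(1)} = z^{-1/8} e^{-A z^{5/4}}\sum_{n\ge0}\mu_n z^{-5n/4}$, with $\mu_0 = 1$, into the linearised equation and collect the coefficient of $z^{-7/8 - 5\ell/4}$. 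The term $(u^{(1)})''$ at order $n=\ell-1$ contributes $\be_{\ell-1}(\be_{\ell-1}-1)\mu_{\ell-1}$ with $\be_{\ell-1} = \tfrac98 - \tfrac54\ell$; since $\be_{\ell-1}(\be_{\ell-1}-1) = \tfrac{25}{16}(\ell-\tfrac9{10})(\ell-\tfrac1{10})$, this is the quadratic factor in the stated recursion. The cross term $-2(A z^{5/4})'(u^{(1)})'$ together with $-(Az^{5/4})''u^{(1)}$ contributes a multiple of $\ell\,\mu_\ell$, and the subleading part of $12 u^{(0)} = 12 z^{1/2}\sum_{k\ge1} u_k z^{-5k/2}$ supplies the convolution $\sum_{k}\mu_k u_{(\ell-k+1)/2}$ (the half-integer index, with the convention $u_{n/2}=0$ for odd $n$, coming from the mismatch between the $z^{-5/2}$ spacing of $u^{(0)}$ and the $z^{-5/4}$ spacing of the instanton series). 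Solving for $\mu_\ell$ and using $A = \tfrac{8\sqrt3}{5}$ reproduces the displayed recursion verbatim.

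Next I would apply the large-order relation. Setting $t = z^{-5/4}$ turns the perturbative series into $\sum_g u_g t^{2g}$ and the one-instanton correction into $t^{1/2}e^{-A/t}\sum_n \mu_n t^n$, so the Borel transform has its dominant singularity at $\zeta = A$ with local data fixed by the $\mu_n$. The Cauchy/dispersion integral $u_g = \frac{1}{2\pi i}\oint \Phi(t)\,t^{-2g-1}\,dt$, with the discontinuity across the cut supplied by the one-instanton term, evaluates (after the substitution $s = 1/t$) to a sum of Laplace integrals $\int_0^\infty e^{-As}s^{2g-1/2-n-1}\,ds = \Gamma(2g-1/2-n)/A^{2g-1/2-n}$. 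Hence $u_g \sim \frac{S}{2\pi i}\sum_{n\ge0}\mu_n \Gamma(2g-1/2-n)/A^{2g-1/2-n}$; writing $\Gamma(2g-1/2-n) = \Gamma(2g-1/2)/\prod_{m=1}^n(2g-1/2-m)$ and $A^{-(2g-1/2-n)} = A^n A^{-(2g-1/2)}$ factors out the leading term and assembles exactly the bracketed series in the theorem. The characteristic $2g$ (rather than $g$) in the Gamma function is forced by the action scaling as the square root of the natural expansion variable $t^2$.

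The main obstacle is the Stokes constant $S = -i\,3^{1/4}/\sqrt\pi$. The recursion is invariant under $u^{(1)} \mapsto c\, u^{(1)}$, so the formal/resurgent analysis above fixes the entire shape of the expansion but not its overall normalisation: that single number is genuinely non-perturbative data. Pinning it down requires the connection (Stokes) data of the Painlev\'e~I equation, which I would import from the isomonodromic Riemann--Hilbert analysis of the associated linear problem, or equivalently determine by matching the exponentially small imaginary ambiguity of the lateral Borel sums of $u(z)$ across the relevant Stokes line to the one-instanton amplitude $z^{-1/8}e^{-Az^{5/4}}$. I expect this transcendental step to be the crux; the construction of $u^{(1)}$ and the dispersion relation are essentially mechanical once the framework is in place.
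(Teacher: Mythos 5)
Your proposal is correct and follows essentially the same route as the source this paper relies on: the paper does not prove Theorem~\ref{t_gTheorem} itself but cites Appendix~A of Garoufalidis--L\^e--Mari\~no, whose argument is exactly your one-instanton trans-series construction (linearising $u^2-\tfrac16 u''=z$, WKB action $A z^{5/4}$ with $A=\tfrac{8\sqrt3}{5}$ and prefactor $z^{-1/8}$, yielding the $\mu_\ell$ recursion) followed by the large-order dispersion relation, with the Stokes constant $S$ imported from the Painlev\'e~I connection problem. Your coefficient checks all verify, e.g.\ $\be_{\ell-1}(\be_{\ell-1}-1)=\tfrac{25}{16}\bigl(\ell-\tfrac9{10}\bigr)\bigl(\ell-\tfrac1{10}\bigr)$, the prefactor $\tfrac{1}{2A}=\tfrac{5}{16\sqrt3}$, and the Laplace integrals $\int_0^\infty e^{-As}s^{2g-1/2-n-1}\,ds=\Gamma(2g-\tfrac12-n)A^{-(2g-1/2-n)}$ assembling the stated expansion.
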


Similarly, the differential equation in Theorem~\ref{p_gTheorem} was used by Garoufalidis and Mari\~no\cite{GM10} to determine the complete asymptotic behaviour of $v_g$ and hence,
by Theorem~\ref{p_gTheorem}, $p_g$ up to an unknown constant $S'$. The complete details can be found in Garoufalidis and Mari\~no\cite{GM10} and the following
appears as Theorem 1 in \cite{GM10}.
\begin{thm}\label{p_gAsymptotics}
Suppose the series
	\[ v(z) = z^{1/4}\sum_{g=0}^\infty v_g z^{-5g/4} \]
satisfies the differential equation
	\[ 2v' - v^2 + 3u = 0, \]
where $u(z)$ is the series described in Theorem~\ref{t_gTheorem}. Then the sequence $v_g$ has an asymptotic expansion of the form
	\[ v_g \sim (A/2)^{-g} \Gamma(g) \frac{S'}{2 \pi i} \left\{ 1 + \sum_{\ell = 1}^\infty \frac{\nu_\ell(A/2)^\ell}{\prod_{m=1}^\ell (g-m)} \right\}, \]
where $A$ is given in Theorem~\ref{t_gTheorem}, $S' \not = 0$ is some non-zero Stokes constant, and the sequence $\nu_\ell$ is defined by the recursion relation
	\[ \nu_\ell = -\frac{4}{5\ell} \sum_{k=0}^{\ell-1} v_{\ell + 1 - k}\nu_k, \qquad \nu_0 = 1. \]
\end{thm}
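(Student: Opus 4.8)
The plan is to treat this as a resurgent large-order analysis of the nonlinear equation $2v' - v^2 + 3u = 0$: the factorial growth of the coefficients $v_g$ is governed by the exponentially small solutions of the equation linearized about the formal series $v$, and the stated expansion is the image of that one-instanton solution under the standard dispersion relation. First I would extract the recursion the ODE imposes on the coefficients. Writing $v = \sum_{g\ge0} v_g z^{(1-5g)/4}$ and collecting the coefficient of $z^{(2-5m)/4}$ gives, for $m\ge1$,
\[ 2 v_0 v_m = \frac{6-5m}{2}\,v_{m-1} - \sum_{k=1}^{m-1} v_k v_{m-k} + 3 u_{m/2}, \]
with the convention (as in Theorem~\ref{t_gTheorem}) that $u_{m/2}=0$ when $m$ is odd; the $m=0$ balance reads $v_0^2 = 3u_0 = 3$, so $v_0 = \sqrt3$, and $m=1$ gives $v_1 = \tfrac14$. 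These two values are exactly what makes the rest of the argument close.

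Next I would identify the one-instanton solution. Setting $v = v^{(0)} + \delta v$ with $v^{(0)}$ the formal series, the linearization is the homogeneous equation $2(\delta v)' = 2v^{(0)}\,\delta v$; its leading balance $2(\delta v)'\sim 2v_0 z^{1/4}\delta v$ forces $\delta v \sim \exp(\tfrac{4v_0}{5}z^{5/4})$, and since $\tfrac{4v_0}{5} = \tfrac{4\sqrt3}{5} = \tfrac{A}{2}$ with $A = \tfrac{8\sqrt3}{5}$, the decaying solution carries the instanton factor $e^{-\frac{A}{2}z^{5/4}}$. Substituting the full ansatz $\delta v = e^{-\frac{A}{2}z^{5/4}}\sum_{\ell\ge0}\nu_\ell z^{(1-5\ell)/4}$ into $2(\delta v)' = 2v^{(0)}\delta v$, the derivative of the exponential cancels the $v_0 z^{1/4}$ term (using $v_0 = \tfrac{5A}{8}$), the $v_1 = \tfrac14$ term on the right combines with the $\tfrac{1-5\ell}{4}\nu_\ell$ coming from the derivative on the left to produce $-\tfrac{5\ell}{4}\nu_\ell$, and matching the coefficient of $z^{(2-5(\ell+1))/4}$ leaves exactly
\[ \nu_\ell = -\frac{4}{5\ell}\sum_{k=0}^{\ell-1} v_{\ell+1-k}\,\nu_k,\qquad \nu_0 = 1. \]
Thus the $\nu_\ell$ of the statement are precisely the coefficients of the one-instanton correction series, and its overall prefactor works out to $z^{1/4}$, the same power as $v$ itself.

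Finally I would invoke the large-order relation of resurgence. In the variable $x = z^{-5/4}$ the perturbative series is $V(x) = \sum_g v_g x^g$ and the one-instanton correction is $e^{-\frac{A}{2}/x}\sum_\ell \nu_\ell x^\ell$, carrying no residual power of $x$ relative to $V$ — this is where $v_1 = \tfrac14$ reappears. Deforming the Cauchy integral $v_g = \frac{1}{2\pi i}\oint V(x)\,x^{-g-1}dx$ onto the cut whose discontinuity is this correction, each term $\nu_\ell x^\ell e^{-\frac{A}{2}/x}$ contributes, through the Gamma integral $\int_0^\infty e^{-t}t^{g-\ell-1}dt = \Gamma(g-\ell)$, a factor $(A/2)^{\ell-g}\Gamma(g-\ell)$. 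Summing over $\ell$ and using $\Gamma(g-\ell) = \Gamma(g)/\prod_{m=1}^\ell(g-m)$ reproduces the claimed expansion verbatim, with the overall normalization $S'/(2\pi i)$ being the Stokes constant attached to the singularity, which the local analysis leaves undetermined (and nonzero).

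The main obstacle is turning this formal dispersion argument into a genuine asymptotic statement, and in particular showing that the singularity at $A/2$ is the dominant one. The equation is inhomogeneous, with the source $3u$ itself factorially divergent, so a priori the coefficients $v_g$ could inherit the large-order behavior of the $u_h$. This is exactly where Theorem~\ref{t_gTheorem} is indispensable: it locates the singularity coming from $u$ at action $A$, and since $A/2 < A$ the homogeneous instanton dominates while the source contributes only at the subleading exponential order $e^{-A z^{5/4}}$. Making the error control uniform in $g$ can then be carried out either through the established resurgent structure of the Painlev\'e~I hierarchy or by a direct induction on the recursion above, estimating the nonlinear convolution $\sum_k v_k v_{m-k}$ against the conjectured growth rate.
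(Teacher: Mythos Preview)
The paper does not actually prove this statement. Theorem~\ref{p_gAsymptotics} is quoted verbatim from Garoufalidis and Mari\~no~\cite{GM10} (where it is their Theorem~1), and the paper explicitly says ``the complete details can be found in Garoufalidis and Mari\~no~\cite{GM10}.'' The paper's own contribution is Theorem~\ref{p_gTheorem}, which establishes the differential equation $2v'-v^2+3u=0$ as a fact about the map asymptotics constants; Theorem~\ref{p_gAsymptotics} is then cited as a consequence. So there is no in-paper proof to compare against.

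Your resurgent large-order argument is the standard route (and, as far as one can infer, essentially what \cite{GM10} does): derive the coefficient recursion from the ODE, linearize to find the one-instanton sector, read off the instanton action $A/2$ and the fluctuation recursion for the $\nu_\ell$, and then translate this into the large-$g$ expansion of $v_g$ via the usual dispersion/Borel relation. Your identification of the key subtlety --- that the inhomogeneous source $3u$ has its own Borel singularity at action $A$, which is strictly farther than $A/2$ and hence subleading --- is exactly right and is where Theorem~\ref{t_gTheorem} enters.

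One sign slip worth fixing: in this paper's normalization (stated explicitly in Section~5) one has $v_0=-\sqrt{3}$, not $+\sqrt{3}$. This matters for your cancellation step. The linearization $(\delta v)'=v^{(0)}\delta v$ is first order, so it has a \emph{single} homogeneous solution $\delta v\sim\exp\!\bigl(\tfrac{4v_0}{5}z^{5/4}\bigr)$; with $v_0=+\sqrt{3}$ this is $e^{+(A/2)z^{5/4}}$, and the leading terms in your ansatz $e^{-(A/2)z^{5/4}}\sum_\ell\nu_\ell z^{(1-5\ell)/4}$ would \emph{not} cancel (you would get $-\tfrac{5A}{4}-2v_0=-4\sqrt{3}\ne0$). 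With $v_0=-\sqrt{3}$ the cancellation $-\tfrac{5A}{4}-2v_0=0$ goes through, and your derivation of $\nu_\ell=-\tfrac{4}{5\ell}\sum_{k=0}^{\ell-1}v_{\ell+1-k}\nu_k$ is then correct as written (note that $v_1=\tfrac14$ is independent of the sign of $v_0$, so that part of your computation survives). This does not change the structure of the argument, only the sign bookkeeping.
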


The rest of this paper is organized as follows. In Section 2 we discuss how the generating series for locally orientable maps (a combination of both orientable and
non-orientable) has a pfaffian structure. This is in analogy with the generating series for orientable maps which has a determinantal structure. As a result we
show that the locally orientable map series satisfies the BKP equation, a pfaffian analog of the KP equation. In addition, we discuss some linear differential
equations which are satisfied by the locally orientable map series which follow by considering the removal of vertices with degree one or two. In Section 3 we
use the differential equations described in Section 2 to derive a cubic differential equation for the specialization of the locally orientable map series to
triangulations. In Section 4 we give a structural result for $\Lt_g(z)$, the generating series for locally orientable triangulations. In particular, we show that
it can be written as a rational series in terms of an auxilliary algebraic series. This type of structure seems to arise frequently in map enumeration and in
permutation factorization problems as in, for example, Goulden, Guay-Paquet and Novak's work on monotone Hurwtiz numbers\cite{GGN13}, however the reason for this
is not clear. Lastly, in Section 5, we combine the structure theorem and the cubic differential equation to derive some results about the asymptotic behaviour
of locally orientable triangulations. As an application, we are able to prove Garoufalidis and Mari\~no's conjecture about the non-orientable map asymptotics constants. 

\section{Symmetric Matrix Integrals and Locally Orientable Maps}

We define an averaging (expectation) operator over $\bbR^N$ as follows:
	\[ \langle f(\lambda) \rangle_N = \frac{1}{N!} \int_{\bbR^N} |V(\la)| f(\lambda) \exp \left( \frac{-p_2(\la)}{4} \right) d\lambda. \]
Here $V(\lambda)$ is the Vandermonde determinant and $p_k(\lambda)$ is the $k$th power sum symmetric function. They are given by
	\[ V(\lambda) = \prod_{1 \leq i < j \leq N} (\la_i - \la_j) \qquad \mbox{ and } \qquad p_k(\lambda) = \sum_{1 \leq i \leq N} \la_i^k. \]
This averaging operator is related to a similar operator over the vector space $W_N$ of all $N \times N$ real symmetric matrices $M \in W_N$ with measure
$e^{-\Tr M^2/4}$. In fact, up to a multiplicative constant,
	\[ \langle f(\lambda) \rangle_{N} = \int_{W_N} f(M) e^{-\Tr M^2/4} dM. \]
This relationship follows from the fact that the integrand is conjugation invariant and so we may use the polar decomposition for real symmetric matrices
to reduce the integral to one over $\bbR^N$.

\begin{thm}[Kakei\cite{K99}, Van de Leur\cite{VdL01}, Adler and Moerbeke\cite{AM01}] \label{BKP}
	Suppose
		\[ Z_N = \left\langle \exp\left( \sum_{k \geq 1} \frac{p_k(\lambda)}{2k} t_k \right) \right\rangle_N. \]
	Then
		\[ (\del_1^4 + 3\del_2^2 + 3\del_3\del_1) \log Z_N + 6(\del_1^2 \log Z_N)^2 = 3 \frac{Z_{N+2} Z_{N-2}}{Z_N^2}. \]
\end{thm}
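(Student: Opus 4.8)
The plan is to exhibit $Z_N$ as a pfaffian tau function of the Pfaff lattice (the coupled BKP hierarchy) and then to recognize the stated identity as the logarithmic form of its lowest Hirota bilinear equation.

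First I would produce a pfaffian representation of $Z_N$. Since the averaging operator carries $|V(\la)|$ rather than $V(\la)^2$, de Bruijn's integration formula applies and converts the $N$-fold integral into a pfaffian of pair-moments,
\[ Z_N = \Pf\big(b_{ij}\big)_{0 \le i,j < N}, \qquad b_{ij} = \iint_{\bbR^2} \sgn(y-x)\,x^i y^j\, w(x)w(y)\,dx\,dy, \]
where $w(x) = \exp\!\big(-\tf{x^2}{4} + \sum_{k \ge 1}\tf{x^k}{2k}\,t_k\big)$ (for $N$ even; $N$ odd carries one extra row and column). Differentiating $w$ in $t_k$ multiplies the integrand by $\tf{1}{2k}(x^k+y^k)$, which in the monomial basis is simply a shift of indices, so each flow $\del_{t_k}$ acts on the entries $b_{ij}$ as a shift operator. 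This is exactly the situation in which the skew-Borel (Pfaff-lattice) factorization of the moment matrix produces a lattice of pfaffian tau functions indexed by $N$, and these tau functions are the $Z_N$.

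Next I would derive the bilinear identities. The conceptual engine is the boson--fermion correspondence for neutral (type $B$) free fermions: the moment matrix corresponds to a single element of the relevant group orbit, so the neutral-fermion bilinear identity holds and, expanded through the vertex operators, generates the whole BKP hierarchy. Crucially, the neutral-fermion identity shifts charge by \emph{even} amounts, which is what forces the coupling in the lattice direction to involve $Z_{N\pm 2}$ rather than $Z_{N\pm 1}$. In practice the same relations can be obtained by elementary pfaffian algebra---the Jacobi/expansion identities relating the pfaffian of a skew matrix to pfaffians with two rows and columns adjoined or deleted---in exact analogy with the way the Pl\"ucker relations yield the KP hierarchy in the determinantal (orientable) case.

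Finally I would extract the lowest equation. Expanding the generating bilinear identity to lowest order in the spectral parameter and rewriting in Hirota operators gives
\[ (D_1^4 + 3D_2^2 + 3D_1 D_3)\,Z_N \cdot Z_N = 6\,Z_{N+2}Z_{N-2}. \]
Dividing by $2Z_N^2$ and using the standard identities $\tf{1}{2Z^2}D_1^4 Z\!\cdot\!Z = \del_1^4 \log Z + 6(\del_1^2\log Z)^2$, $\tf{1}{2Z^2}D_2^2 Z\!\cdot\!Z = \del_2^2\log Z$, and $\tf{1}{2Z^2}D_1 D_3 Z\!\cdot\!Z = \del_1\del_3\log Z$, turns the left-hand side into $(\del_1^4 + 3\del_2^2 + 3\del_3\del_1)\log Z_N + 6(\del_1^2\log Z_N)^2$ and the right-hand side into $3\,Z_{N+2}Z_{N-2}/Z_N^2$, which is the claim. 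The main obstacle I anticipate is twofold: first, pinning down the precise coupling term and the numerical coefficients $(6,3,3)$, which requires carefully tracking the charge-shift-by-two structure and the spectral expansion rather than the schematic argument above; and second, justifying analytically that the Gaussian-type weight with $|V(\la)|$ really yields a Pfaff-lattice tau function, i.e.\ that the moment integrals converge and the skew-Borel factorization is valid, so that the formal hierarchy genuinely governs the $Z_N$.
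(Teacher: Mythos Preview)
Your proposal is correct and follows essentially the same route as the paper. Both arguments (i) invoke de~Bruijn's identity to write $Z_N$ as a pfaffian of pair moments, (ii) observe that $\del_{t_k}$ acts on those moments by index shifts, and (iii) feed this into the pfaffian expansion/Pl\"ucker-type identities to obtain the lowest BKP equation; the paper, following Kakei, carries this out directly for even $N$ and appeals to polynomiality in $N$ (your ``extra row and column'' remark) for the odd case, while citing the Fock-space derivations of Van~de~Leur and Adler--Moerbeke exactly as you do for the conceptual backdrop. Your explicit Hirota-to-logarithmic conversion at the end is a detail the paper leaves implicit, but otherwise the two proofs coincide.
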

\begin{proof}
	The differential equation which $Z_N$ satisfies is known as the BKP equation and is one of many in the BKP hierarchy. Adler and Moerbeke\cite{AM01} showed that for
	even $N$ the $Z_N$ satisfies each of the differential equations in the BKP hierarchy. Van de Leur\cite{VdL01} then generalized this result to any $N$. In both cases
	the authors used the Fock space approach to integrable hierarchies. For our purposes it will suffice to show that for even $N$,  $Z_N$ satisfies the BKP equation.
	In this case there is a direct proof as shown by Kakei\cite{K99} and which can also be found in Hirota\cite{H04}.
	
	For the remainder of this proof we will work with $2N$ rather than $N$. Note that this is sufficient for our purposes since the particular integral which
	we are interested in (Theorem~\ref{MtxIntegral}) is a series with coefficients which are polynomial in $N$ (see Remark~\ref{polynomiality}).
	Recall that given a $2N \times 2N$ antisymmetric matrix $A_{2N} = [a_{i,j}]_{1 \leq i,j \leq 2N}$ we may define the pfaffian as
		\[ \Pf(A_{2N}) = \sum \sgn\left( \begin{array}{cccc} 1 & 2 & \cdots & 2N \\ j_1 & j_2 & \cdots & j_{2N} \end{array} \right) a_{j_1,j_2} a_{j_3,j_4} \cdots a_{j_{2N-1},j_{2N}}, \]
	where the summation is over all $(j_1, \cdots j_{2N})$ such that $j_1 < j_3 < \cdots < j_{2N-1}$ and $j_1 < j_2, \cdots, j_{2N-1} < j_{2N}$. For our purposes
	it will be more convenient to write the pfaffian in a different way. Given a pairing $(i,j)$ we define a pfaffian corresponding to a sequence $(a_1, a_2, \cdots, a_{2m})$ as
		\[ (a_1, a_2, \cdots, a_{2m}) = \sum_{j=2}^{2m} (-1)^j (a_1, a_j)(a_2, a_3, \cdots, \widehat{a_j}, \cdots, a_{2m}), \]
	where $\widehat{a_j}$ means that $a_j$ is omitted.
	
	Using a Theorem of de Bruijn\cite{dB55} (this can also be found in Mehta\cite{M04}), we may write $Z_{2N}$ as a pfaffian. In particular,
		\[ Z_{2N} = \Pf \left[ \mu_{i,j}(t) \right]_{1 \leq i,j \leq 2N}, \]
	where
		\[ \mu_{i,j}(t) = \mathop{\int \int}_{x < y} \left( x^{i-1} y^{j-1} - y^{i-1} x^{j-1} \right) \exp \left( \frac{-x^2 - y^2}{4} + \sum_{k \geq 1} \frac{x^k + y^k}{2k} t_k \right). \]
	Similarly, we may write
		\[ Z_{2N} = (1, 2, \cdots, 2N), \]
	with the pairing given by $(i,j) = \mu_{i,j}(t)$.
	
	We may also check that for any positive integer $k$,
		\[ 2k \frac{\del}{\del t_k} \mu_{i,j}(t) = \mu_{i+k,j}(t) + \mu_{i,j+k}(t). \]
	Using the linear differential equation satisfied by the entries of the pfaffian corresponding to $Z_{2N}$ we may find identities which are satisfied by $Z_{2N}$ itself. In particular,
	it follows that
		\[ 2k \frac{\del}{\del t_k} (a_1, a_2, \cdots, a_{2N}) = \sum_{j = 1}^{2N} (a_1, a_2, \cdots, a_{j+k}, \cdots, a_{2N}). \]
	
	Furthermore, it is straightforward to show that pfaffians satisfy the identities
		\begin{align*}
			(a_1, a_2, \cdots, a_{2m}, & 1, 2, \cdots, 2n)(1, 2, \cdots, 2n) = \\
									   & \sum_{j=2}^{2m} (-1)^j (a_1, a_j, 1, 2, \cdots, 2n)(a_2, a_2, \cdots, \widehat{a_j}, \cdots, a_{2m}, 1, 2, \cdots, 2n),
		\end{align*}
	where $\widehat{a_j}$ means that $a_j$ is omitted. Using this pfaffian identity and the linear differential equations satisfied by $Z_{2N}$ the result follows.
\end{proof}

\begin{thm}[Mehta\cite{M04}, Van de Leur\cite{VdL01}, Adler and Moerbeke\cite{AM01}] \label{Virasoro}
	Suppose
		\[ Z_N = \left\langle \exp\left( \sum_{k \geq 1} \frac{p_k(\lambda)}{2k} t_k \right) \right\rangle_N. \]
	Then
		\[ \del_1 \log Z_N = \sum_{i \geq 1} i t_{i+1} \del_i \log Z_N + \frac{N t_1}{2}, \]
	and
		\[ \del_2 \log Z_N = \sum_{i \geq 1} \frac{i}{2} t_i \del_i \log Z_N + \frac{N(N+1)}{4}. \]
\end{thm}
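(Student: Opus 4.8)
The plan is to obtain both identities as Virasoro constraints coming from the invariance of the eigenvalue integral under infinitesimal changes of variables on $\bbR^N$. Writing
\[ Z_N = \f{1}{N!}\int_{\bbR^N} I(\la)\,d\la, \qquad I(\la) = |V(\la)|\exp\Big(-\f{p_2(\la)}{4} + \sum_{k\ge 1}\f{p_k(\la)}{2k}t_k\Big), \]
I would use that under a polynomial vector field $\la_i\mapsto \la_i + \vep\,\phi(\la_i)$ the value of $Z_N$ is unchanged, so that its first-order variation in $\vep$ vanishes; equivalently, $\sum_i \int_{\bbR^N}\del_{\la_i}\big(\phi(\la_i)I(\la)\big)\,d\la = 0$. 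The first step is to record the logarithmic derivative of the integrand on the open set where the $\la_i$ are distinct,
\[ \del_{\la_i}\log I(\la) = \sum_{j\ne i}\f{1}{\la_i-\la_j} - \f{\la_i}{2} + \sum_{k\ge 1}\f{t_k}{2}\la_i^{k-1}, \]
so that the invariance condition becomes
\[ \f{1}{N!}\int_{\bbR^N}\Big(\sum_i \phi'(\la_i) + \sum_i \phi(\la_i)\,\del_{\la_i}\log I(\la)\Big)I(\la)\,d\la = 0. \]

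I would then specialize $\phi$. Taking $\phi(\la)=1$ kills the Jacobian term $\sum_i\phi'(\la_i)$ and, by antisymmetry, the Vandermonde sum $\sum_{i\ne j}(\la_i-\la_j)^{-1}$; what survives is $-\tf12 p_1(\la) + \sum_{k\ge 1}\tf{t_k}{2}p_{k-1}(\la)$, where $p_0(\la)=N$. Taking instead $\phi(\la)=\la$ gives the Jacobian term $\sum_i 1 = N$ and, after symmetrizing, the Vandermonde contribution $\sum_{i\ne j}\f{\la_i}{\la_i-\la_j} = \binom{N}{2}$, together with $-\tf12 p_2(\la) + \sum_{k\ge 1}\tf{t_k}{2}p_k(\la)$. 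In each case the remaining step is to convert power-sum insertions into $t$-derivatives: since $\del_k Z_N = \f{1}{N!}\int_{\bbR^N} \f{p_k(\la)}{2k}I(\la)\,d\la$, any insertion of $p_k(\la)$ against the measure $\f{1}{N!}I(\la)\,d\la$ equals $2k\,\del_k Z_N$. Dividing the resulting relations by $Z_N$ turns them into the stated linear identities for $\log Z_N$: the $\phi=\la$ case produces the constant $N + \binom{N}{2} = \tf{N(N+1)}{2}$, which after the overall factor of $\tf12$ gives the advertised $\tf{N(N+1)}{4}$, while the $\phi=1$ case produces the $\tf{Nt_1}{2}$ term from the $k=1$ summand $\tf{t_1}{2}p_0(\la) = \tf{t_1}{2}N$.

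The bulk of the argument is thus routine bookkeeping, and the part requiring genuine care is the analytic justification of the integration-by-parts identity. The boundary terms at infinity vanish because of the Gaussian factor $e^{-p_2(\la)/4}$, treating the $t_k$ as formal (or sufficiently small) parameters so that the remaining exponential is dominated by the Gaussian. The non-smoothness of $|V(\la)|$ across the coincidence walls $\la_i=\la_j$ causes no harm either: the integrand $I(\la)$ vanishes on these walls, so there is no boundary contribution there, and although $\del_{\la_i}\log|V|$ is singular along a wall it is multiplied by $|V|$, which vanishes to first order, so the product stays integrable and the displayed formula for $\del_{\la_i}\log I$ is the correct one away from a measure-zero set. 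Once these points are settled, the two constraints follow immediately by reading off the $\phi=1$ and $\phi=\la$ specializations; the same scheme with $\phi(\la)=\la^{m+1}$ would generate the full Virasoro hierarchy, but only these two lowest members are needed here.
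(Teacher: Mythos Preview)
Your argument is correct and is precisely the direct change-of-variables/translation-invariance computation attributed to Mehta that the paper cites in lieu of a proof; your bookkeeping for both the $\phi=1$ and $\phi=\la$ specializations checks out, and your treatment of the boundary terms and the coincidence walls is more careful than anything the paper actually writes down. The only thing the paper adds beyond the reference is the combinatorial remark that, after identifying $Z_N$ with the locally orientable map series, the two constraints correspond to removing a vertex of degree one or two---but this is an interpretation, not an alternative proof.
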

\begin{proof}
	This can be shown directly using the fact that the integration measure is translation invariant and the details of this method can be found in Mehta\cite{M04}. Alternate proofs using
	Fock space methods can be found in Van de Leur\cite{VdL01} and Adler and Moerbeke\cite{AM01}.
	
	For our purposes, via Theorem~\ref{MtxIntegral}, the series of interest is a generating series for rooted maps. In this case, the two equations correspond to adding / removing
	a vertex of degree one and adding / removing a vertex of degree two.
\end{proof}

Let $\ell_{k,\al}$ be the number of locally orientable maps with $k$ faces and vertex partition given by $\al$. Then the genus series for maps in locally orientable surfaces is defined to be
	\[ L(t;y) = \sum_{k \geq 1} \sum_{\al} \ell_{k,\al} y^k t_\al. \]
\begin{rmk} \label{polynomiality}
	Let \[ \ell_\al(y) = [t_\al] L(t;y) = \sum_{k\geq 1} \ell_{k,\al}y^k. \] Then, since for any fixed map the sum of the vertex degrees is equal to the sum of the face degrees, we must
	have $k \leq |\al|$. Thus, each $\ell_{\al}(y)$ is a polynomial in $y$.
\end{rmk}

\begin{thm}[Goulden and Jackson\cite{GJ97}] \label{MtxIntegral}
	For any positive integer $N$,
		\[ L(t; N) = 2 E \log \left\langle \exp\left( \sum_{k \geq 1} \frac{p_k(\lambda)}{2k} t_k \right) \right\rangle_N, \]
	where
		\[ E = \sum_{k \geq 1} k t_k \del_k. \]
\end{thm}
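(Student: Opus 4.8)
The plan is to pass from the eigenvalue integral to the integral over real symmetric matrices $M \in W_N$ recorded in the paragraph preceding Theorem~\ref{BKP}, and then to expand everything as a formal power series in the variables $t_k$. On the matrix side $p_k(\la) = \Tr M^k$, so that, up to the (ultimately irrelevant) multiplicative constant,
\[ Z_N = \int_{W_N} \exp\Bigl( \sum_{k \geq 1} \f{t_k}{2k}\, \Tr M^k \Bigr)\, e^{-\Tr M^2/4}\, dM. \]
Because each $\ell_\al(y)$ is a polynomial (Remark~\ref{polynomiality}) and the matching coefficient on the right is governed by finitely many moments of a Gaussian, the identity to be proved is one of formal power series in the $t_k$ with coefficients polynomial in $N$; convergence plays no role and we may work order by order.

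First I would expand the exponential and reduce each coefficient to a Gaussian matrix moment of a product of the form $\prod_k (\Tr M^k)^{m_k}$, which I would evaluate by Wick's theorem for the real symmetric ensemble. The key input is the propagator $\langle M_{ab}M_{cd}\rangle = \de_{ac}\de_{bd} + \de_{ad}\de_{bc}$: it has \emph{two} terms, corresponding to the two ways (orientation preserving and orientation reversing) of gluing a pair of edge-sides. This is exactly the feature that produces locally orientable, rather than merely orientable, surfaces, in contrast with the single-term propagator of the Hermitian model. Reading each insertion $\Tr M^k$ as a $k$-valent vertex carrying the weight $\f{t_k}{2k}$, each Wick contraction as an edge, and each closed index loop as a face contributing one factor of $N$, the fully contracted diagrams are precisely (ribbon) maps, and the Gaussian moment reorganizes as a sum over locally orientable maps weighted by $N^{F}\,t_\al$ and combinatorial symmetry factors, where $F$ is the number of faces.

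Next I would take the logarithm. By the standard exponential (linked-cluster) formula, $\log Z_N$ retains only the \emph{connected} maps, each weighted by $N^{F} t_\al$ and, essentially, by the reciprocal of the order of its automorphism group, the factor $\f{1}{2k}$ at a $k$-valent vertex being the local contribution of the (dihedral) symmetries of an unoriented vertex. Since the omitted normalizing constant does not involve the $t_k$, it is annihilated by $E$ and disappears from the final identity. Finally I would apply $2E$. On a monomial $t_\al$ the operator $E = \sum_k k t_k \del_k$ acts as multiplication by $|\al|$, the sum of the vertex degrees; the effect of $2E$ is thus to trade the automorphism weight $1/|\Aut m|$ for the number of rootings, turning the symmetry-weighted connected sum $\log Z_N$ into the honest generating series $L(t;N)$ of rooted locally orientable maps.

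The heart of the argument, and the step I expect to be the main obstacle, is the bookkeeping of symmetry factors that makes this last sentence exact. One must check that the vertex weights $\f{1}{2k}$, the two-valued symmetric propagator, and the labelling of the Wick diagrams conspire so that each connected map occurs in $\log Z_N$ with weight exactly $1/|\Aut m|$, and then that multiplication by $2|\al|$ reproduces precisely the count $\ell_{k,\al}$ under the rooting (distinguished angular sector) convention of the introduction. Pinning down the constant $2$ is the subtle point: it reflects both the reflection symmetry of unoriented vertices encoded in $\f{1}{2k}$ and the orbit-counting relation between automorphism classes of maps and their rooted representatives, and it is here that the orientation-reversing term of the propagator must be matched correctly to the edge-twisting data of a locally orientable map so that no diagram is over- or under-counted.
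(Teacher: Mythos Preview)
The paper does not supply its own proof of this theorem: it is stated as a quotation of Goulden and Jackson~\cite{GJ97} and used as a black box, so there is nothing in the paper to compare your argument against line by line.

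That said, your sketch is the standard route to this identity and is essentially the argument of the cited reference: pass to the real-symmetric matrix integral, expand in the $t_k$, evaluate the Gaussian moments by Wick's rule with the two-term propagator $\langle M_{ab}M_{cd}\rangle = \de_{ac}\de_{bd}+\de_{ad}\de_{bc}$, interpret the resulting pairings as locally orientable ribbon graphs with a factor of $N$ per face, take $\log$ to isolate connected objects, and apply $2E$ to root. You have correctly flagged the only genuinely delicate point, namely that the per-vertex weight $\tfrac{1}{2k}$ together with the two-term propagator produces each unrooted connected map with weight $1/|\Aut|$, after which the orbit--stabilizer relation $|\Aut(m)|\cdot(\text{number of rootings of }m)=2|\al|$ turns $2E\log Z_N$ into the rooted count. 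Nothing in your outline is wrong; to make it a proof rather than a plan you would need to carry out that bookkeeping explicitly (or cite \cite{GJ97} for it), since it is exactly the place where the factor~$2$ and the non-orientable edge-twists must be matched.
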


In particular, Theorem~\ref{MtxIntegral} above implies that the differential equations in Theorem~\ref{BKP} and Theorem~\ref{Virasoro} are also satisfied by the generating
series for locally orientable maps. This collection of differential equations is the primary tool used in the remainder of this paper.

\section{A Cubic Differential Equation for Triangulations}

Let $\Lt(x,y) = \left. L(t;y) \right|_{t_i = x \delta_{i,3}}$. That is, let $\Lt(x,y)$ be the generating series for locally orientable cubic maps. Similarly, let $\Lt(x;w)$ be the generating series
for locally orientable cubic maps with vertices marked by $x$ and Euler characteristic marked by $w$. By Euler's formula,
	\[ \Lt(x;w) = w^2 \left. \Lt(x,y) \right|_{x \mapsto xw^{1/2}, y \mapsto w^{-1}}. \]
We begin by using Theorem~\ref{BKP} and Theorem~\ref{Virasoro} to derive a cubic differential equation for $\Lt(x;w)$.

\begin{thm} \label{MasterEquationGenus}
	Let
	\begin{align*}
		T &= \Lt(x;w) + w + 1 - \frac{1}{2x^2}, \\
		V &= w^{-2}\left( (1+2w)^2 \Lt\left( x(1+2w)^{1/2}, \frac{w}{1+2w} \right) \right. \\
		  & \qquad \left. + (1-2w)^2 \Lt\left(x(1-2w)^{1/2}, \frac{w}{1-2w}\right) - 2\Lt(x;w) \right).
	\end{align*}
	Then
	\begin{align*}
		4x^4 & w^2 (D+12)^2 (D+8)(D+4)T - (D+6)DT + 12x^4(D+12)((D+4)T)^2 \\
					 &= V \left( 2x^4w^2(D+12)(D+8)(D+4)T - \frac{1}{2}(D+6)T + 6x^4((D+4)T)^2 \right).
	\end{align*}
\end{thm}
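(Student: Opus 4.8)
The plan is to transcribe the BKP equation of Theorem~\ref{BKP} into the language of $\Lt$ after the specialization $t_i = x\de_{i,3}$, which selects cubic maps. Writing $\phi = \log Z_N$, the BKP equation reads $(\del_1^4 + 3\del_2^2 + 3\del_1\del_3)\phi + 6(\del_1^2\phi)^2 = 3 Z_{N+2}Z_{N-2}/Z_N^2$. The obstacle is the right-hand side: it is the exponential of the second difference $\Delta_N := \log Z_{N+2} + \log Z_{N-2} - 2\log Z_N$ and hence cannot itself be a differential polynomial. The \emph{key idea} is to differentiate the whole equation by $\del_3$. Since $\del_3(3e^{\Delta_N}) = 3e^{\Delta_N}\,\del_3\Delta_N$ and $3e^{\Delta_N}$ equals the left-hand differential polynomial by the equation itself, the exponential is eliminated and one is left with an identity of the form (differential polynomial in $\phi$) $=$ (differential polynomial in $\phi$)$\,\cdot\,\del_3\Delta_N$. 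Moreover $\del_3\Delta_N$ is the second difference in $N$ of $\del_3\phi$, a quantity directly proportional to $\Lt$ evaluated at shifted values of $N$; this is the term that will become $V$.

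Next I would eliminate all $\del_1$- and $\del_2$-derivatives. Under the specialization the two constraints of Theorem~\ref{Virasoro} collapse to $\del_1\phi = 2x\,\del_2\phi$ and $\del_2\phi = \tf{3x}{2}\del_3\phi + \tf{N(N+1)}{4}$; differentiating these identities in $t_1,t_2,t_3$ before specializing then expresses each of $\del_1^4\phi$, $\del_2^2\phi$, $\del_1\del_3\phi$ and $\del_1^2\phi$ as a differential polynomial in $\del_3\phi$ together with the explicit inhomogeneous constants coming from $\tf{Nt_1}{2}$ and $\tf{N(N+1)}{4}$. By Theorem~\ref{MtxIntegral} the operator $E$ reduces to $3x\del_3$ after specialization, so $\Lt(x,N) = 6x\,\del_3\phi$; equivalently $\del_3\phi = \Lt/(6x)$ and $\del_x$ agrees with $\del_3$ on the specialized series, so every pure power of $\del_3$ becomes the operator $D = x\del_x$ acting on $\Lt$. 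Carrying this out converts both the left-hand differential polynomial and the bracketed factor multiplying $\del_3\Delta_N$ into differential polynomials in $\Lt$ and $D$, and the accumulated constants are precisely what is absorbed by replacing $\Lt$ with $T = \Lt(x;w) + w + 1 - \tf{1}{2x^2}$.

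It then remains to pass from the vertex/size variables $(x,N)$ to the vertex/Euler-characteristic variables $(x;w)$ via $\Lt(x;w) = w^2\Lt(xw^{1/2}, w^{-1})$. Under $w = 1/N$ the discrete shifts $N\mapsto N\pm 2$ become $w\mapsto \tf{w}{1\pm 2w}$, while the factor $w^2$ and the rescaling $x\mapsto xw^{1/2}$ in Euler's formula produce the prefactors $(1\pm 2w)^2$ and the arguments $x(1\pm 2w)^{1/2}$. Thus $\del_3\Delta_N$ becomes precisely $V$, up to elementary factors of $x$ and $w$ that I would track, and assembling the two sides yields the stated identity $4x^4w^2(D+12)^2(D+8)(D+4)T - (D+6)DT + 12x^4(D+12)((D+4)T)^2 = V\big(2x^4w^2(D+12)(D+8)(D+4)T - \tf12(D+6)T + 6x^4((D+4)T)^2\big)$.

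The conceptual steps are clean, so I expect the main obstacle to be the bookkeeping in two places. First, one must track the inhomogeneous constants from the Virasoro constraints together with the degenerate small-map contributions through every substitution, verifying that they collapse exactly into the correction $w + 1 - \tf{1}{2x^2}$ and produce the precise integers $4,8,12,6$ and the shifts $+4,+8,+12$. Second, the change of variables to $(x;w)$ is delicate because $\del_3$ and $D = x\del_x$ do not transform trivially under $x\mapsto xw^{1/2}$, $y\mapsto w^{-1}$, and it is here that the $(1\pm 2w)^{1/2}$ factors in $V$ must be matched. Checking that $\del_3$ applied to the bracket reproduces the left-hand side, with the single factor $(D+12)$ promoted to $(D+12)^2$ and the coefficients $2,\tf12,6$ promoted to $4,1,12$, is the computational heart of the argument.
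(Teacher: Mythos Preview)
Your proposal is correct and follows essentially the same route as the paper: apply the Virasoro constraints to eliminate $\del_1,\del_2$ in the BKP equation after the cubic specialization, then differentiate once more (the paper applies $2D$ with $D=3x\del_x$ rather than your $\del_3$, but after specialization these differ only by the scalar $6x$) so that the exponential $e^{\Delta_N}$ on the right is replaced by the left-hand differential polynomial times $D\Delta_N$, and finally pass to Euler-characteristic variables $(x;w)$ to turn the $N\mapsto N\pm2$ shifts into the stated $V$. The only slip is that the paper's operator is $D=3x\del_x$, not $x\del_x$; this accounts for the precise integers in the final identity.
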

\begin{proof}
	Recall from Theorem~\ref{Virasoro} that if
		\[ Z_N = \left\langle \exp\left( \sum_{k \geq 1} \frac{p_k(\lambda)}{2k} t_k \right) \right\rangle_N, \]
	then
		\begin{align*}
			\del_1 \log Z_N	&= \sum_{i \geq 1} i t_{i+1} \del_i \log Z_N + \frac{N t_1}{2}, \\
			\del_2 \log Z_N &= \sum_{i \geq 1} \frac{i}{2} t_i \del_i \log Z_N + \frac{N(N+1)}{4},
		\end{align*}
	and (from Theorem~\ref{BKP}) that
		\[ (\del_1^4 + 3\del_2^2 - 3\del_3\del_1)\log Z_N + 6(\del_1^2 \log Z_N)^2 = \frac{3}{4} \frac{Z_{N+2}Z_{N-2}}{Z_N^2}. \]
	Let $Y_N = \left. \log Z_N \right|_{t_i = 0, i > 3}$. Then the equations above imply that
		\begin{align}
			\del_1 Y_N &= t_2 \del_1 Y_N + 2 t_3 \del_2 Y_N + \frac{N t_1}{2}, \label{Vir1} \\
			\del_2 Y_N &= \frac{1}{2} t_1 \del_1 Y_N + t_2 \del_2 Y_N + \frac{3}{2} t_3 \del_3 Y_N + \frac{N(N+1)}{4}, \label{Vir2} \\
			\del_1^4 Y_N &+ 3\del_2^2 Y_N - 3\del_3 \del_1 Y_N + 6(\del_1^2 Y_N)^2 = \frac{3}{4} \exp(Y_{N+2} + Y_{N-2} - 2Y_N). \label{BKP2} 
		\end{align}
	The $\del_2 Y_N$ term can be eliminated from \eqref{Vir1} using \eqref{Vir2} and similarly the $\del_1 Y_N$ term can be eliminated from \eqref{Vir2} using \eqref{Vir1}. Letting $D = 3 t_3 \del_3$ this gives
		\begin{align}
			\del_1 Y_N	&= \frac{t_3}{A(1-t_2)} D Y_N + C, \label{mVir1} \\
			\del_2 Y_N  &= \frac{1}{2A} D Y_N + \frac{B}{A}, \label{mVir2}
		\end{align}
	where
		\begin{align*}
			A	&= 1 - \frac{t_1 t_3}{1 - t_2} - t_2, \\
			B	&= \frac{Nt_1^2}{4(1-t_2)} + \frac{N(N+1)}{4},
		\end{align*}
	and
		\[ C = \frac{2t_3B}{A(1-t_2)} + \frac{Nt_1}{2(1-t_2)}. \]
	Letting $T = \left. 2DY_n \right|_{t_i = x\delta_{i,3}} + N(N+1) - \frac{N}{2x^2} = L^{(3)}(x,N) + N(N+1) - \frac{N}{2x^2}$ (the second equality follows from Theorem~\ref{MtxIntegral}), \eqref{mVir1} and
	\eqref{mVir2} can be used to determine
		\begin{align}
			\left. \del_1^2 Y_N \right|_{t_i = x \delta_{i,3}}	&= \frac{x^2}{2}(D+4)T, \label{Y11} \\
			\left. \del_1^4 Y_N \right|_{t_i = x \delta_{i,3}}	&= \frac{x^4}{2}(D+12)(D+8)(D+4) T, \label{Y1111} \\
			\left. \del_2^2 Y_N \right|_{t_i = x \delta_{i,3}}	&= \frac{1}{8}(D+2)T - \frac{N}{4x^2}, \label{Y22} \\
			\left. \del_1\del_3 Y_N \right|_{t_i = x \delta_{i,3}} &= \frac{1}{6}(D+3)T - \frac{N}{4x^2}, \label{Y13}
		\end{align}
	where now $D = 3x\del_x$. Using the equations above, \eqref{BKP2} becomes
		\begin{align*}
			\frac{x^4}{2} & (D+12) (D+8)(D+4)T + \frac{3}{8}(D+2)T - \frac{3}{4}\frac{N}{x^2} - \frac{1}{2}(D+3)T + \frac{3}{4}\frac{N}{x^2} \\
			                     & + 6\left( \frac{x^2}{2}(D+4)T \right)^2 = \frac{3}{4} \left( \left. Y_{N+2} \right|_{t_i = x\delta_{i,3}} + \left. Y_{N-2} \right|_{t_i = x\delta_{i,3}} - \left. 2Y_N \right|_{t_i = x\delta_{i,3}} \right).
		\end{align*}
	Simplifying, this becomes
		\begin{align*}
			2x^4(D+12) & (D+8)(D+4)T - \frac{1}{2}(D+6)T + 6x^2((D+4)T)^2 \\
			           &= \frac{3}{4} \left( \left. Y_{N+2} \right|_{t_i = x\delta_{i,3}} + \left. Y_{N-2} \right|_{t_i = x\delta_{i,3}} - \left. 2Y_{N} \right|_{t_i = x\delta_{i,3}} \right).
		\end{align*}
	Applying the operator $2D$ to both sides of this equation gives
		\begin{align}
			4x^4 & (D+12)^2(D+8)(D+4)T - (D+6)DT + 12x^4(D+12)((D+4)T)^2 \notag \\
									& = V(2x^4(D+12)(D+8)(D+4)T - \frac{1}{2}(D+6)T + 6x^4((D+4)T)^2), \label{mEquation1}
		\end{align}
	where
		\begin{align*}
			V	&= 2D\left. Y_{N+2} \right|_{t_i = x\delta_{i,3}} + 2D \left. Y_{N-2} \right|_{t_i = x\delta_{i,3}} - 4D \left. Y_{N} \right|_{t_i = x\delta_{i,3}} \\
				&= L^{(3)}(x,N+2) + L^{(3)}(x,N-2) - 2L^{(3)}(x,N).
		\end{align*}
	Now, using the fact that the coefficient of $x^n$ in $L^{(3)}(x,y)$ is a polynomial in $y$ (see Remark~\ref{polynomiality}), we may extract coefficients in \eqref{mEquation1} to get a countable number
	of polynomial identities which are satisfied and thus equation~\eqref{mEquation1} holds with
		\begin{align*}
			T &= \Lt(x,y) + y(y+1) - \frac{y}{2x^2}, \\
			V &= \Lt(x,y+2) + \Lt(x, y-2) - 2\Lt(x,y).
		\end{align*} 
	Recall that
		\[ \Lt(xw^{1/2}, w^{-1}) = w^{-2}\Lt(x;w). \]
	We have
		\begin{align*}
			\Lt(xw^{1/2},w^{-1} + 2)	&= \Lt\left(x(1+2w)^{1/2} \left( \frac{w}{1+2w} \right)^{1/2}, \left( \frac{w}{1+2w} \right)^{-1} \right) \\
										&= w^{-2} (1+2w)^2 \Lt\left( x(1+2w)^{1/2}, \frac{w}{1+2w} \right).
		\end{align*}
	Similarly,
		\[ \Lt(xw^{1/2}, w^{-1} - 2) = w^{-2} (1-2w)^2 \Lt\left( x(1-2w)^{1/2}, \frac{w}{1-2w} \right). \]
	Making the change of variables $x \mapsto w^{1/2}x$, setting $y = w^{-1}$ and then simplifying gives the desired result.
\end{proof}

Since any cubic map must have an even number of vertices, we let $x = z^{1/2}$. Note that in this case $D$ becomes $D = 6z\del_z$. Also, let
	\[ \Lt_g(z) = [w^g]\Lt(z^{1/2};w), \]
and let
	\[ T_g(z) = [w^g]\left(\Lt(z^{1/2};w) + w + 1 - \frac{1}{2z}\right) = \Lt_g(z) + \delta_{g,1} + \left( 1 - \frac{1}{2z} \right)\delta_{g,0}. \]
\begin{cor} \label{MasterEquationGenusRefined}
	For $g \geq 0$,
		\begin{align*}
			4z^2 & (D+12)^2(D+8)(D+4)T_{g-2} - (D+6)DT_g \\
			     & \qquad + 12z^2(D+12)\left( \sum_{i=0}^g \left\{ (D+4)T_i \right\} \left\{ (D+4)T_{g-i} \right\} \right) \\
				 &= \sum_{k=0}^g V_k \left\{ 2z^2(D+12)(D+8)(D+4)T_{g-k-2} \right. \\
				 &  \qquad \qquad \left. - \frac{1}{2}(D+6)T_{g-k} + 6z^2\left( \sum_{i=0}^{g-k} \left\{ (D+4)T_i \right\} \left\{ (D+4)T_{g-k-i} \right\} \right) \right\},
		\end{align*}
	where
		\[ V_k = \sum_{t=0}^k 2^{k-t+2} (1 + (-1)^{k-t}) \sum_{i=0}^{k-t+2} \binom{2-t}{k-t-i+2} \frac{z^i}{i!} \del_z^i \Lt_t(z). \]
\end{cor}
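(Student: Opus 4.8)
The plan is to derive Corollary~\ref{MasterEquationGenusRefined} directly from Theorem~\ref{MasterEquationGenus} by extracting the coefficient of $w^g$ from the master equation, having first rewritten every term as a power series in $w$. The starting point is the identity of Theorem~\ref{MasterEquationGenus}, which relates $T = \Lt(x;w) + w + 1 - \frac{1}{2x^2}$ and the difference operator $V$. Since we set $x = z^{1/2}$ so that $D = 6z\del_z$ and $x^4 w^2 = z^2 w^2$, I would first substitute these into the statement of Theorem~\ref{MasterEquationGenus}. The coefficient extraction $[w^g]$ then acts termwise; the only subtlety is that the nonlinear terms $((D+4)T)^2$ and the product $V\cdot(\cdots)$ become Cauchy convolutions, which is precisely the source of the sums $\sum_{i=0}^g$ and $\sum_{k=0}^g$ appearing in the corollary. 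The shift $T_{g-2}$ accompanying the $z^2 = x^4 w^2$ factors comes from the extra $w^2$, and writing $T_g = [w^g]T = \Lt_g(z) + \delta_{g,1} + (1 - \frac{1}{2z})\delta_{g,0}$ as defined just above the corollary handles the low-genus correction terms cleanly.

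The genuine work is in computing $V_k = [w^k]V$ explicitly, and this is the step I expect to be the main obstacle. Recall that
\[
V = w^{-2}\left( (1+2w)^2 \Lt\!\left( x(1+2w)^{1/2}, \tfrac{w}{1+2w} \right) + (1-2w)^2 \Lt\!\left(x(1-2w)^{1/2}, \tfrac{w}{1-2w}\right) - 2\Lt(x;w) \right).
\]
To extract $[w^k]$ I would expand each of the two shifted terms. The key observation is that $\Lt(x\rho^{1/2}, w/\rho)$ with $\rho = 1\pm 2w$ can be re-expressed using $\Lt_t(z)$: writing $\Lt(x;w) = \sum_{t\geq 0} w^t \Lt_t(x^2)$ and tracking how the substitution $x \mapsto x(1\pm 2w)^{1/2}$, $w \mapsto w/(1\pm 2w)$ acts, the rescaling of the first argument produces, via Taylor's theorem, the operator $\sum_i \frac{z^i}{i!}\del_z^i$ acting on $\Lt_t(z)$, while the factors $(1\pm 2w)^2$ and the reciprocal shift in the genus variable contribute the binomial coefficients $\binom{2-t}{\,\cdot\,}$ and the powers $2^{k-t+2}$. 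The two sign choices $\pm 2w$ combine to give the factor $(1 + (-1)^{k-t})$, which correctly kills the odd contributions. Assembling these bookkeeping pieces is exactly the claimed formula for $V_k$.

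Concretely, I would proceed as follows. First, establish the generating-function identity $(1\pm 2w)^2\,\Lt\!\left(x(1\pm2w)^{1/2}, \tfrac{w}{1\pm2w}\right) = \sum_{t\geq 0} (1\pm2w)^{2-t}\,w^t\,\Lt_t\!\left(x^2(1\pm2w)\right)$, using $\Lt(x;w)=\sum_t w^t\Lt_t(x^2)$ and the homogeneity in the pair $(x,w)$. Second, expand $\Lt_t(x^2(1\pm2w))$ as a Taylor series in the increment $\pm2w x^2$ about $x^2 = z$, which is where the differential operator $\frac{z^i}{i!}\del_z^i$ enters once we set $x^2 = z$. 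Third, expand $(1\pm2w)^{2-t}$ by the binomial series to obtain the $\binom{2-t}{\,\cdot\,}$ coefficients, collect the total power of $w$, and sum the two sign cases to produce $(1+(-1)^{k-t})$; the overall $w^{-2}$ prefactor shifts the exponent index to match $V_k = [w^k]V$. Throughout I can use Remark~\ref{polynomiality}, which guarantees each $\Lt_t(z)$ is a genuine (Laurent) polynomial, so all these expansions are finite and the coefficient extraction is legitimate rather than merely formal. The remainder of the corollary is then a direct transcription of $[w^g]$ applied to the substituted master equation, with the convolution sums recording the Cauchy products of the nonlinear terms.
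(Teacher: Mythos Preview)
Your proposal is correct and follows exactly the approach the paper indicates: substitute $x=z^{1/2}$ into Theorem~\ref{MasterEquationGenus} and extract $[w^g]$, with the nonlinear terms becoming Cauchy convolutions and the computation of $V_k=[w^k]V$ carried out via the expansion $(1\pm 2w)^{2}\Lt(x(1\pm 2w)^{1/2},w/(1\pm 2w))=\sum_{t}(1\pm 2w)^{2-t}w^{t}\Lt_t(z(1\pm 2w))$, a Taylor expansion in the increment $\pm 2wz$, and a binomial expansion of $(1\pm 2w)^{2-t}$; the paper's own proof is the single sentence ``substitute and extract coefficients,'' and you have simply spelled out the bookkeeping.

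One small correction: your appeal to Remark~\ref{polynomiality} is misplaced. That remark asserts that each $\ell_\alpha(y)$ is a polynomial in the \emph{face-counting} variable $y$, not that $\Lt_t(z)$ is a polynomial in $z$; in fact $\Lt_t(z)$ is an infinite formal power series (there are cubic maps of every sufficiently large size on a fixed surface). This does not damage your argument, since the Taylor expansion of $\Lt_t(z(1\pm 2w))$ and the subsequent extraction of $[w^k]$ are perfectly valid manipulations of formal power series, and for each fixed $k$ only finitely many derivatives $\del_z^i\Lt_t(z)$ with $0\le i\le k-t+2$ survive. Just drop the reference to Remark~\ref{polynomiality} and note instead that the extraction is finite because the total $w$-degree contributed by $t$, $i$, and $j$ is fixed.
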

\begin{proof}
	This follows after making the substitution $x = z^{1/2}$ in Theorem~\ref{MasterEquationGenus} and extracting the coefficient of $w^g$.
\end{proof}

\section{A Structure Theorem for $\Lt_g(z)$}

In \cite{G91} Gao studied the generating series for rooted triangulations on arbitrary surfaces. Using a Tutte type equation for the generating series he was able to find
compact representations of the generating series in the projective plane, sphere and torus cases. In each case the generating series is rational in the auxilliary, algebraic
series $s = s(x)$ which is the unique power series solution to $z = \frac{1}{2}s(1-s)(1-2s)$ with $s(0) = 0$. In particular, Gao proves Theorem~\ref{GaoTheorem} below.
Using this same method we show in Theorem~\ref{AlgebraicSeriesTheorem} that the generating series for a surface of any type is always rational when expressed in terms of
the series $s$. Note that the method used below is essentially the same as the method used in \cite{G91} and also in \cite{BC86}. The difference is that here we consider
cubic maps enumerated with respect to the number of vertices and in \cite{G91} Gao consideres triangulations with respect to the number of vertices.

To simplify some of the expression below we let $\eta = 1 - 6s + 6s^2$.
  
\begin{thm}[Gao\cite{G91}] \label{GaoTheorem}
	The generating series $\Lt_0(z)$ and $\Lt_1(z)$ are given in terms of $s$ by
		\begin{align*}
			\Lt_0(z)	&= \frac{2s(1-4s+2s^2)}{(1-s)(1-2s)^2}, \\
			\Lt_1(z)	&= \frac{(1-2s)(1-s+s^2) - (1-6s+6s^2)^\frac{1}{2}}{s(1-s)(1-2s)}.
		\end{align*}
\end{thm}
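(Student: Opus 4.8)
The plan is to verify that the two stated expressions, once transported through the algebraic substitution $z = \tfrac{1}{2}s(1-s)(1-2s)$, satisfy the $g=0$ and $g=1$ specializations of the cubic differential equation of Corollary~\ref{MasterEquationGenusRefined}; this recovers Gao's formulas \cite{G91} (originally obtained by a Tutte/loop-equation argument) directly from the machinery already in place. The first step is to record how $D = 6z\del_z$ acts in the variable $s$. Differentiating the defining relation gives $\tfrac{dz}{ds} = \tfrac12\eta$ with $\eta = 1-6s+6s^2$, hence $\tfrac{ds}{dz} = 2/\eta$ and
\[ D = 6z\del_z = \frac{6s(1-s)(1-2s)}{\eta}\,\del_s. \]
Thus $D$ carries any rational function of $s$ to a rational function of $s$, introducing only extra powers of $\eta$ in the denominator, while $\del_s\eta^{1/2} = -3(1-2s)\eta^{-1/2}$ governs the single irrational generator $\eta^{1/2}$. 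With these two rules every $D$-derivative appearing below becomes an explicit element of $\bbQ(s)$ or of $\bbQ(s)\,\eta^{1/2}$.

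For $g=0$ we have $T_{-1}=T_{-2}=0$, so Corollary~\ref{MasterEquationGenusRefined} collapses to the single second-order equation
\[ -(D+6)DT_0 + 12z^2(D+12)\{(D+4)T_0\}^2 = V_0\Big(-\tfrac12(D+6)T_0 + 6z^2\{(D+4)T_0\}^2\Big), \]
where $T_0 = \Lt_0 + 1 - \tfrac{1}{2z}$ and $V_0 = 8\big(\Lt_0 + 2z\del_z\Lt_0 + \tfrac12 z^2\del_z^2\Lt_0\big)$. Substituting the claimed rational expression for $\Lt_0$, converting each $D$-derivative into a rational function of $s$ by the rule above, and clearing the common power of $\eta$ reduces the identity to a single polynomial equality in $s$, which is then checked directly. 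Because the Corollary is derived by extracting coefficients from an identity equivalent to the defining recursion, the genus-zero equation together with the low-order terms determines $\Lt_0$ as a formal power series; since the closed form is a genuine power series in $z$ (as $s = 2z + O(z^2)$) with $\Lt_0(0)=0$, this verification pins down $\Lt_0$.

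For $g=1$ one has $T_1 = \Lt_1 + 1$, and, because every product term either pairs $T_1$ with the known $T_0$ or enters only through $V_1 = 8\big(z\del_z\Lt_1 + \tfrac12 z^2\del_z^2\Lt_1\big)$, the Corollary specializes to a \emph{linear} second-order equation for $\Lt_1$ with coefficients rational in $s$. This equation is singular precisely where $\eta$ vanishes, i.e. at $s = \tfrac{3\pm\sqrt3}{6}$, the branch points of $s(z)$, which is exactly why a solution analytic at $s=0$ must lie in $\bbQ(s)[\eta^{1/2}]$ rather than in $\bbQ(s)$; this is the orientable/non-orientable dichotomy made explicit. Writing the candidate as $\big(P(s)-\eta^{1/2}\big)/Q(s)$ with $P(s)=(1-2s)(1-s+s^2)$ and $Q(s)=s(1-s)(1-2s)$, I would feed it into the linear equation, use $\del_s\eta^{1/2} = -3(1-2s)\eta^{-1/2}$ to compute the derivatives, and then split the resulting identity into its $\bbQ(s)$-part and its $\eta^{1/2}\bbQ(s)$-part, which must vanish separately since $\eta^{1/2}\notin\bbQ(s)$; verifying these two polynomial identities and matching the analytic branch at $s=0$ completes the proof.

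The main obstacle is this genus-one computation: the repeated differentiation of $\eta^{1/2}$ must be organized so that the rational and irrational components separate cleanly, and one must confirm that the correct branch (the one with $\eta^{1/2}\to 1$ as $s\to 0$) is the analytic solution singled out by the combinatorics rather than its companion. The $g=0$ step, by contrast, is purely mechanical polynomial algebra in $s$ once $D$ has been transported to the $s$-variable.
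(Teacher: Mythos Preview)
The paper does not prove this theorem; it is stated with attribution to Gao~\cite{G91}, where the formulas are obtained by the Tutte root-face-deletion recursion together with Brown's quadratic-method factorization (a fragment of the $g=0$ derivation is reproduced inside the proof of Theorem~\ref{AlgebraicSeriesTheorem}). Your plan---substitute the claimed expressions into the $g=0$ and $g=1$ cases of Corollary~\ref{MasterEquationGenusRefined} and check---is therefore a genuinely different route, and an attractive one in that it would make the paper internally self-contained. The substitution/verification part of your plan is sound: transporting $D$ to the $s$-variable and reducing to polynomial identities in $s$ (and separately in $s$ and $\eta^{1/2}$) is exactly the right mechanism.

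The gap is in the uniqueness step. Verifying that a candidate satisfies the equation is only half the job; you must also show that the equation pins down the combinatorial series. For $g=0$ the equation is a genuinely cubic ODE in $T_0$ (note $V_0$ already contains $\Lt_0$ and its derivatives, so the right-hand side is cubic), and your assertion that it is ``equivalent to the defining recursion'' is not correct: the BKP equation is a single constraint, not a recursion that \emph{a priori} determines every coefficient of $\Lt_0$. You would need to extract the coefficient of $z^n$ and exhibit a nonvanishing leading factor multiplying the top unknown. For $g=1$ your own analysis shows the equation is linear of second order in $\Lt_1$, hence has a two-parameter family of formal solutions; ``matching the analytic branch at $s=0$'' is not by itself a selection principle, since $s=0$ corresponds to $z=0$, which you have not shown to be a singular point with only one power-series solution. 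You must either analyse the indicial behaviour at $z=0$ and show that exactly one Frobenius solution is a genuine power series, or match the first two Taylor coefficients of $\Lt_1$ (computed combinatorially) against the candidate. Until that is done, your argument shows only that Gao's formulas are \emph{consistent} with the BKP constraint, not that they are forced by it.
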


\begin{thm}\label{AlgebraicSeriesTheorem}
	For $g \geq 0$ the series $\Lt_g$ is a polynomial in $\eta^{\frac{1}{2}}$ with coefficients which are rational series in $s$.
\end{thm}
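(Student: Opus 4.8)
The plan is to prove Theorem~\ref{AlgebraicSeriesTheorem} by induction on the genus $g$, using Corollary~\ref{MasterEquationGenusRefined} as a recursion that expresses the ``top'' differential-order part of $\Lt_g$ in terms of the lower-genus series $\Lt_0, \ld, \Lt_{g-1}$, which are already known (by induction) to be polynomials in $\eta^{1/2}$ with rational-in-$s$ coefficients. The base cases $g=0$ and $g=1$ are supplied by Theorem~\ref{GaoTheorem}: indeed $\Lt_0$ is rational in $s$ (a polynomial in $\eta^{1/2}$ of degree $0$) and $\Lt_1$ is $\eta^{1/2}$ times a rational function of $s$ plus a rational function of $s$, so it is a degree-one polynomial in $\eta^{1/2}$. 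The first thing I would establish is that the ring $R = \bbQ(s)[\eta^{1/2}]$ is the natural home for all these series, and in particular that $R$ is closed under the operations appearing in the recursion, namely multiplication and the derivation $D = 6z\del_z$.

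\emph{Closure under $D$.} The key structural fact is that $z$ itself is a rational function of $s$: from the defining relation $z = \tf{1}{2}s(1-s)(1-2s)$ we get $\tf{dz}{ds} = \tf{1}{2}(1 - 6s + 6s^2) = \tf{1}{2}\eta$, so $\del_z = \tf{2}{\eta}\del_s$ and hence $D = 6z\del_z = \tf{12z}{\eta}\del_s$. Therefore I would first check that $D$ maps $R$ into $R$. On a rational function $f(s) \in \bbQ(s)$ we have $Df = \tf{12z}{\eta}f'(s)$; since $z$ is a polynomial in $s$ but we must divide by $\eta$, I need to confirm that no spurious $\eta$ in the denominator escapes the ring, which is fine because $\bbQ(s)$ already contains $1/\eta$. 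On $\eta^{1/2}$ we compute $D(\eta^{1/2}) = \tf{12z}{\eta}\cdot\tf{1}{2}\eta^{-1/2}\eta' = \tf{6z\,\eta'}{\eta}\eta^{-1/2} = \tf{6z\,\eta'}{\eta^{3/2}}$, and since $\eta^{-3/2} = \eta^{-2}\cdot\eta^{1/2}$ with $\eta^{-2}\in\bbQ(s)$, this again lands in $R$. Thus $D$ is a derivation on $R$, and so is any polynomial in $D$; consequently every term of the form $(D+c_1)\cd(D+c_k)\Lt_i$ with $\Lt_i\in R$ stays in $R$, and products of such terms stay in $R$ since $R$ is a ring.

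\emph{The inductive step.} Assuming $\Lt_0,\ld,\Lt_{g-1}\in R$, I would read Corollary~\ref{MasterEquationGenusRefined} as an equation in which every quantity except $\Lt_g$ is already known to lie in $R$: the coefficients $V_k$ for $k\geq 1$ involve only $\Lt_t$ with $t\leq k\leq g-1$ (and the Taylor-type operator $\tf{z^i}{i!}\del_z^i$, which by the above is a polynomial combination of $D$'s and hence preserves $R$); the convolution sums $\sum_{i}(D+4)T_i\,(D+4)T_{g-i}$ contain the unknown $\Lt_g$ only in the two boundary terms $i=0$ and $i=g$, which are linear in $\Lt_g$; and the remaining explicit terms $T_{g-2}$, $T_g$ are linear in $\Lt_g$. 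Collecting all occurrences of the unknown, the equation takes the schematic form $\cL(\Lt_g) = (\text{known element of }R)$, where $\cL$ is a \emph{linear} differential operator in $D$ with coefficients in $R$ — the nonlinear convolution contributes only the linear cross-terms $2(D+4)T_0\cdot(D+4)\Lt_g$ after using $T_i = \Lt_i$ up to the explicit low-genus corrections recorded in the definition of $T_g$. I would then argue that $\cL$ is invertible on $R$ in the relevant degree range, solving for $\Lt_g$ as an element of $R$.

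\emph{Main obstacle.} The crux, and the step I expect to require the most care, is controlling the top-order operator $\cL$ and showing the solution genuinely stays a \emph{polynomial} in $\eta^{1/2}$ of bounded degree rather than acquiring higher or negative powers of $\eta^{1/2}$. Two points need attention. First, the operator $\cL$ must be checked to be nonsingular as an operator on power series in $z$ (equivalently, on $R$ graded by order of vanishing at $s=0$): one wants to read off the leading symbol of $\cL$ and verify it does not annihilate the relevant coefficients, so that $\Lt_g$ is uniquely determined. Since $\Lt_g$ is a priori a well-defined power series in $z$ (a genus-$g$ generating series), existence is not in question; what the recursion buys is the \emph{algebraic form} of that series. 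Second, and more delicately, is the parity/degree bookkeeping in $\eta^{1/2}$: multiplying two elements of $R$ can raise the $\eta^{1/2}$-degree, and applying $D$ raises it as well (via $D\eta^{1/2}\sim\eta^{-3/2}\eta\cd$, which after clearing is degree $+1$ relative to a naive count), so I must track how the degree propagates through the recursion and confirm it does not grow without bound at each fixed genus. The cleanest way to handle this is to show that in fact $\Lt_g$ for even $g$ is rational in $s$ while for odd $g$ it is $\eta^{1/2}$ times a rational function, i.e. the $\eta^{1/2}$-degree is controlled by the parity of $g$; this refined claim, matching the pattern in Theorem~\ref{GaoTheorem}, is both easier to propagate through the bilinear recursion (even $\times$ even and odd $\times$ odd both give even, matching the genus-additivity $i + (g-i) = g$) and immediately implies the stated theorem.
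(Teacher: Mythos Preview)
Your approach is genuinely different from the paper's, and it has a real gap at precisely the point you flag as the ``main obstacle.'' The paper does \emph{not} prove Theorem~\ref{AlgebraicSeriesTheorem} from the BKP recursion of Corollary~\ref{MasterEquationGenusRefined}; it proves it by the Tutte/quadratic-method route of Gao. One writes down the root-edge-deletion recursion \eqref{ACT.MainEquation} for near-triangulations $T_g(x,y,I)$, squares out the genus-$0$ case to get $A(x,y)^2 = B(x,y)$, applies Brown's factorization theorem to write $B = Q^2 R$ with $R(x,y) = 1 - 2sy + (3s^2-2s)y^2$, and then argues inductively that every $T_g(x,y,I)$ is a polynomial in the various $R^{1/2}$'s with coefficients rational in $s,y,z_i$. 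Specializing to $y^3$ and using $R^{1/2}(x,\tfrac{1}{1-s}) = \tfrac{\eta^{1/2}}{1-s}$ gives the statement. Only \emph{after} this is established does the paper bring in Corollary~\ref{MasterEquationGenusRefined}, in the proof of the refined Theorem~\ref{StructureTheorem}, where the a priori algebraicity from Theorem~\ref{AlgebraicSeriesTheorem} is used as an input (``The result then follows using Theorem~\ref{AlgebraicSeriesTheorem}, expanding by partial fractions and then checking coefficients'').

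The gap in your argument is the step ``$\cL$ is invertible on $R$ in the relevant degree range.'' Everything you say about closure --- that $R=\bbQ(s)[\eta^{1/2}]$ is a ring, that $D$ preserves it, that the right-hand side lands in $R$ by induction --- is correct, and your parity observation is also fine. But $\cL$ is a genuine second-order linear differential operator (in $D$) with coefficients in $R$, and there is no mechanism in your outline forcing the unique power-series solution of $\cL f = R_g$ to lie in $R$. Linear ODEs with rational coefficients and rational inhomogeneity typically have transcendental solutions: already $Df=1$ gives $f=\log z$. One would need either (i) an explicit particular solution in $R$ together with the knowledge that no nonzero homogeneous solution is a power series in $z$, or (ii) a triangular coefficient-matching argument in some basis of $R$. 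Option (ii) is essentially what the paper does in Theorem~\ref{StructureTheorem}, but that argument relies on knowing in advance that $\Lt_g$ has a finite $\psi$-expansion --- which is exactly the content of Theorem~\ref{AlgebraicSeriesTheorem}. Without an independent source of algebraicity (such as the Tutte equation), your induction does not close.
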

\begin{proof}
	Let $t_g(n, k, u_1, u_2, \cdots)$ be the number of locally orientable maps with Euler characteristic $g$ which have $n$ vertices, whose root face has degree $k$,
	which have a finite number of distinguished faces the $i$th of which has degree $u_i$ and where every other face has degree three. Let $I = \{i_1 < i_2 < \cdots \}$ be
	a finite set and let
		\[ T_g(x, y, I) = \sum t_g(n, k, u_1, u_2, \cdots) x^n y^k z_{i_1}^{u_1} \cdots. \]
	The series of near-triangular maps ($T_g$) was studied by Gao\cite{G91} using a Tutte type recursion. Here we repeat part of the argument in order to prove the Theorem
	but we refer to the paper for more details. Recall the Tutte type recursion, letting $I$ be a finite set and $w \not \in I$ an integer, then
		\begin{align}
			T_g(x,y,I)	&= y^2 \sum_{j=0/2}^{g} \sum_{S \subseteq I} T_j(x,y,S)T_{g-j}(x,y,I-S) \notag \\
						& \qquad + 2y^3 \left. \frac{\del}{\del z_w} T_{g-1}(x,y,I+\{w\}) \right|_{z_w = y} \notag \\
						& \qquad + y^2 \frac{\del}{\del y} \left( yT_{g-\frac{1}{2}}(x,y,I) \right) \label{ACT.MainEquation} \\
						& \qquad + y^{-1} \left[ T_g(x,y,I) - \delta_{0,g} \delta_{\emptyset, I} x - yT^1_g(x,I) \right] \notag \\
						& \qquad + \sum_{i \in I} \frac{y z_i}{z_i - y} \left[ z_i T_g(x,z_i,I-\{i\}) - yT_g(x,y,I-\{i\}) \right] \notag \\
						& \qquad + \delta_{0,g} \delta_{\emptyset, I} x, \notag
		\end{align} 
	where here the summation index beginning at $0/2$ means to sum over half integers from $0$ to $g$ and where $T^1_g(x,I) = [y]T_g(x,y,I)$. Note also that if we let
	$T^3_g(x,I) = [y^3] T_g(x,y,I)$ then
		\[ T_0^3(x,\emptyset) = T_0^1(x, \emptyset) - x^2, \qquad T_{\frac{1}{2}}^3(x,\emptyset) = T_{\frac{1}{2}}^1(x,\emptyset) - x, \]
	and
		\[ T_g^3(x,\emptyset) = T_g^1(x,\emptyset) \mbox{ for } g \geq 1. \]
	
	Let
		\begin{align*}
			A(x,y)	&= 2y^3 T_0(x,y,\emptyset) + 1 - y, \\
			B(x,y)  &= (1-y)^2 + 4y^3(x - xy + yT_0^1(x,\emptyset)).
		\end{align*}
	Then \eqref{ACT.MainEquation} when $(g,I) = (0,\emptyset)$ is equivalent to
		\[ A(x,y)^2 = B(x,y), \]
	and for $(g,I) \not = (0, \emptyset)$ \eqref{ACT.MainEquation} is equivalent to
		\begin{align*}
			-A(x,y)T_g(x,y,I)	&= y^3 \mathop{\sum_{j=0/2}^{g} \sum_{S \subseteq I}}_{(j,S) \not = (0, \emptyset), (g, I)} T_j(x,y,S)T_{g-j}(x,y,I-S) \\
								& \qquad + 2y^4 \left. \frac{\del}{\del z_w} T_{g-1}(x,y,I+\{w\}) \right|_{z_w = y} \\
								& \qquad + y^3 \frac{\del}{\del y} \left( yT_{g-\frac{1}{2}}(x,y,I) \right) \\
								& \qquad + y^2 \sum_{i \in I} \frac{z_i}{z_i - y} \left[ z_i T_g(x,z_i,I-\{i\}) - yT_g(x,y,I-\{i\}) \right] \\
								& \qquad -yT_g^1(x,I),
		\end{align*} 
	as in \cite{G91}. Let $f(x)$ be a series such that $A(x,f(x)) = 0$. Then, if we define $F^{(k)} = \left. \left( \frac{\del^k F}{\del y^k} \right) \right|_{y = f}$ we see that
		\begin{align*}
			B^{(0)} = (1-f)^2 + 4f^3(x-xf+fT_0^1(x,\emptyset)) = 0, \\
			B^{(1)} = -2(1-f) + 4f^2(3x - 4xf + 4fT_0^1(x,\emptyset)) = 0.
		\end{align*}
	Solving this and setting $f = \frac{1}{1-s}$ we get
		\[ x = \frac{1}{2}s(1-s)(1-2s), \]
	and
		\[ T_0^1(x,\emptyset) = \frac{1}{4}s^2(1-s)(1-3s). \]
	This implies that
		\[ T_0^3(x,\emptyset) = \frac{1}{2}s^3(1-s)(1-4s+2s^2). \]
	By a Theorem of Brown\cite{B65}, since $B(x,y)$ is a square it can be uniquely written as 
		\[ B(x,y) = Q(x,y)^2 R(x,y), \]
	where $Q(x,y)$ and $R(x,y)$ are polynomials in $y$ and where
		\[ R(x,0) = 1. \]
	We may suppose that $B(x,y) = (1 + yQ_1)^2(1 + R_1 y + R_2 y^2)$ where $Q_1, R_1, R_2$ are rational series in $s$. Solving this gives
		\[ Q_1 = s-1, \qquad R_1 = -2s, \qquad \mbox{ and } R_2 = -2s + 3s^2. \]
	It follows by the same argument as in \cite{G91} that $T_g(x,y,I)$ is a polynomial in $R^{\frac{1}{2}}(x,y)$, $R^{\frac{1}{2}}(x,z_i), i \in I$ and $R^{\frac{1}{2}}\left(x,\frac{1}{1-s}\right)$
	with coefficients which are rational series in $x, y, z_i, i \in I, R_1, R_2$ and $\frac{1}{1-s}$. More specifically, if $T_g(x) = [y^3] T_g(x,y,\emptyset)$ then
	$T_g(x)$ is a polynomial in $R^{\frac{1}{2}}\left( x, \frac{1}{1-s} \right)$ with coefficients which are rational series in $s$. Since 
	$R^{\frac{1}{2}}\left( x, \frac{1}{1-s} \right) = \frac{\eta^{\frac{1}{2}}}{1-s}$ it follows that $T_g(x)$ is a polynomial in $\eta^{\frac{1}{2}}$ with coefficients
	which are rational series in $s$. The result then follows from the fact that $\Lt_g(x) = x^{2g-2}T_g(x)$.
\end{proof}

We will now make use of Theorem~\ref{AlgebraicSeriesTheorem} and Theorem~\ref{MasterEquationGenus} to prove a stronger structure theorem for $\Lt_g(z)$. In addition, we will
show that the recurrence implied by Theorem~\ref{MasterEquationGenus} can be used to determine $\Lt_g(z)$ inductively.
It will be convenient for what follows to define a basis with which to work. For $i \geq 0$, define
	\[ \psi_i = \begin{cases} \frac{(1-2s)}{\eta^{\frac{i}{2}+1}}, & \mbox{ if } i \mbox{ is odd}, \\ \frac{1}{\eta^{\frac{i}{2}+1}}, & \mbox{ if } i \mbox{ is even}. \end{cases} \]
\begin{pro}\label{PsiBasis}
	The basis $\psi_i$ defined above satisfies the multiplicative identity,
		\[ \psi_i \psi_j = \chi(i,j) \psi_{i+j+2} + (1 - \chi(i,j))\psi_{i+j}, \]
	where
		\[ \chi(i,j) = \begin{cases} 1, & \mbox{ if either } i \mbox{ or } j \mbox{ is even}, \\ \frac{1}{3}, & \mbox{ if both } i \mbox{ and } j \mbox{ are odd}. \end{cases} \]
	Also, the action of the operator $D$ on the $\psi_i$ basis is given by
		\[ D\psi_i = \begin{cases} (i+2)\psi_{i+4} + (i+2)\psi_{i+2} - 2(i+2)\psi_i, & \mbox{ if } i \mbox{ is even}, \\ (i+2)\psi_{i+4} + i\psi_{i+2} - 2(i+1)\psi_i, & \mbox{ if } i \mbox{ is odd}. \end{cases} \]
\end{pro}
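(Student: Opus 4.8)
The plan is to verify both assertions by direct computation in the variable $s$, treating the parity of the indices as separate cases. The one genuinely new ingredient needed is a usable expression for $D$ as a differential operator in $s$. Since $z = \frac{1}{2}s(1-s)(1-2s) = \frac{1}{2}(s - 3s^2 + 2s^3)$, differentiating gives $\frac{dz}{ds} = \frac{1}{2}(1 - 6s + 6s^2) = \frac{\eta}{2}$, so that $\del_z = \frac{2}{\eta}\del_s$ and hence
	\[ D = 6z\del_z = \frac{6s(1-s)(1-2s)}{\eta}\del_s. \]
I will also record $\frac{d\eta}{ds} = -6(1-2s)$, which is what makes the chain rule close up nicely on the $\psi_i$.

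For the multiplicative identity, I would substitute the definition of $\psi_i$ and reduce each case to a polynomial statement about $\eta$. When at least one of $i,j$ is even, the product $\psi_i\psi_j$ is immediately $\psi_{i+j+2}$ upon matching exponents of $\eta$ and the factors of $(1-2s)$, which is exactly the claim since $\chi(i,j)=1$ there. The only case requiring arithmetic is $i,j$ both odd, where $\psi_i\psi_j = (1-2s)^2\eta^{-(i+j)/2-2}$ must be matched against $\frac{1}{3}\psi_{i+j+2} + \frac{2}{3}\psi_{i+j}$; clearing denominators, this is precisely the identity $(1-2s)^2 = \frac{1}{3} + \frac{2}{3}\eta$, which follows at once from $\eta = 1 - 6s + 6s^2$.

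For the action of $D$, I would apply the operator above to $\psi_i$ in each parity and compare with the stated right-hand side. For $i$ even, writing $\psi_i = \eta^{-(i+2)/2}$ and using $\frac{d\eta}{ds} = -6(1-2s)$ yields $D\psi_i = 18(i+2)s(1-s)(1-2s)^2\eta^{-(i+2)/2-2}$, and after dividing out $(i+2)\eta^{-(i+2)/2}$ the claim collapses to the single polynomial identity $18s(1-s)(1-2s)^2 = 1 + \eta - 2\eta^2$. For $i$ odd, differentiating $\psi_i = (1-2s)\eta^{-(i+2)/2}$ produces two terms; after substituting the even-case identity, the verification reduces further to $1 - \eta = 6s(1-s)$, which is again just the definition of $\eta$. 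The main obstacle is purely bookkeeping: keeping track of the half-integer exponents of $\eta$ that occur for odd $i$ and organizing the parity split so that every case funnels into one of these three elementary polynomial identities in $s$.
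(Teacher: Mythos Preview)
Your proposal is correct and follows exactly the approach the paper indicates: the paper's proof is simply the one-line assertion that the identities follow from the definitions of $\psi_i$, $\eta$, $s$ and $D$, and you have supplied precisely the direct parity-by-parity computation that substantiates this, reducing everything to the three elementary identities $(1-2s)^2=\tfrac{1}{3}+\tfrac{2}{3}\eta$, $18s(1-s)(1-2s)^2=1+\eta-2\eta^2$, and $6s(1-s)=1-\eta$. There is nothing to add.
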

\begin{proof}
	Each of these identites follow from the definition of $\psi_i$, $\eta$, $s$ and $D$.
\end{proof}

\begin{thm}\label{StructureTheorem}
	For $g \geq 2$ the generating series for locally orientable cubic maps is of the form
		\[ \Lt_g = \sum_{i = 0}^{5g - 8} \mu_g(i) \psi_i. \]
	Furthermore, $\Lt_g$ for $g \geq 2$ can be determined recursively using the equation given in Corollary~\ref{MasterEquationGenusRefined}.
\end{thm}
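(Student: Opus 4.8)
The plan is to argue by strong induction on $g$, using Theorem~\ref{AlgebraicSeriesTheorem} to fix the algebraic shape of $\Lt_g$ and Corollary~\ref{MasterEquationGenusRefined} both to locate the coefficients $\mu_g(i)$ and to propagate the structure. By Theorem~\ref{AlgebraicSeriesTheorem} we may write $\Lt_g = P_g + Q_g\,\eta^{1/2}$ with $P_g,Q_g \in \bbQ(s)$, the decomposition being unique since $\eta$ is not a square in $\bbQ(s)$. Membership in $\mathrm{span}_{\bbQ}\{\psi_0,\ldots,\psi_{5g-8}\}$ then says exactly that $P_g$ is a $\bbQ$-polynomial in $\eta^{-1}$ with no constant term and that $Q_g = (1-2s)$ times a $\bbQ$-polynomial in $\eta^{-1}$. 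Thus the theorem upgrades the statement ``rational in $s$'' of Theorem~\ref{AlgebraicSeriesTheorem} to ``poles only at the zeros of $\eta$, with these restricted numerators,'' together with the degree bound. The hypothesis $g \geq 2$ is essential: the series $\Lt_0,\Lt_1$ of Theorem~\ref{GaoTheorem} carry the extra denominators $s$, $1-s$, $1-2s$, so these spurious singularities are precisely what must be shown to disappear for $g \geq 2$.

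First I would assemble the algebra that makes $\psi_i$ a usable basis. Proposition~\ref{PsiBasis} already supplies the product rule and the action of $D$; note that both operations respect the $\bbZ/2$-grading given by the parity of the index ($D\psi_i$ involves only $\psi_{i+4},\psi_{i+2},\psi_i$, and a product has index-parity equal to the sum of the two parities), so the rational parts $P_g$ and the $\eta^{1/2}$-parts $Q_g$ interact exactly as $\eta^{1/2}\cdot\eta^{1/2}=\eta$ dictates. To this I would add the identity $z^2 = \tfrac{1}{432}(1 - 3\eta^2 + 2\eta^3)$, which follows from $s(1-s) = (1-\eta)/6$ and $(1-2s)^2 = (1+2\eta)/3$; consequently multiplication by $z^2$ is a polynomial in $\eta$, and on $\psi_i$ of large enough index it returns a $\bbQ$-combination of $\psi_j$ with $j \leq i$ of the same parity. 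Since $D$, multiplication, and multiplication by $z^2$ are the only operations occurring in Corollary~\ref{MasterEquationGenusRefined}, an ansatz $\Lt_g = \sum_{i=0}^{N}\mu_g(i)\psi_i$ can be expanded entirely in the $\psi$-basis, apart from the two exceptional sources of off-basis terms discussed below.

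Next I would isolate the part of Corollary~\ref{MasterEquationGenusRefined} that is linear in $\Lt_g$: the term $-(D+6)D\,T_g$, the two extreme summands $i \in \{0,g\}$ of the quadratic sum, and the pieces $k \in \{0,g\}$ of the right-hand side (the last through the dependence of $V_g$ on $\Lt_g$). Collecting these gives a linear operator $\mathcal{L}$ applied to $\Lt_g$, with an inhomogeneous term $R_g$ built only from $\Lt_0,\ldots,\Lt_{g-1}$. By the inductive hypothesis together with the explicit $\Lt_0,\Lt_1$, I would expand $R_g$ in the $\psi$-basis. Because each operation shifts the top index by a fixed amount with a nonzero leading structure constant read off from Proposition~\ref{PsiBasis}, the identity $\mathcal{L}[\Lt_g] = R_g$ can be solved for the $\mu_g(i)$ one index at a time from the top down; this is the precise sense in which $\Lt_g$ is determined recursively, and matching the highest indices on the two sides pins the upper limit at $5g-8$ (equivalently, the worst pole of $\Lt_g$ at $\eta=0$ has order $(5g-6)/2$), with the base case $g=2$ anchoring the count.

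The main obstacle is the bookkeeping of singularities, and it is exactly where $g \geq 2$ enters. The extreme summands and the low-$k$ values of $V_k$ carry poles at $s \in \{0,\tfrac12,1\}$ inherited from $\Lt_0$, $\Lt_1$ and from the $\tfrac{1}{2z}$ in $T_0$, while multiplication by $z^2$ applied to a low-index $\psi_i$ produces non-negative powers of $\eta$; both types of term lie outside the span. I would therefore have to prove that in the full equation these contributions cancel, so that $R_g$, and hence $\Lt_g$, has poles only at the zeros of $\eta$, vanishes to the correct order as $\eta\to\infty$, and has $Q_g$ divisible by $(1-2s)$. The cleanest route is to exploit that a solution of the stated rational shape is already guaranteed by Theorem~\ref{AlgebraicSeriesTheorem}: since $\Lt_g \in \bbQ(s)\oplus\bbQ(s)\eta^{1/2}$ solves the equation, a local analysis at each of $s = 0,\tfrac12,1$ and at $\eta=\infty$ forces the offending residues and non-negative $\eta$-powers to vanish for $g \geq 2$. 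Carrying out this local analysis consistently across the rational ($\bbQ[\eta^{-1}]$) and $\eta^{1/2}$ ($(1-2s)\bbQ[\eta^{-1}]$) pieces is the technical heart of the argument; the remaining degree count that yields the exact bound $5g-8$ is then routine, but must be done with care because of the special behaviour of the genus $0$ and $1$ terms.
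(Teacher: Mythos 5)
Your proposal follows essentially the same route as the paper's proof: isolate from Corollary~\ref{MasterEquationGenusRefined} the second-order operator in $D$ acting linearly on $\Lt_g$, induct from the base case $g=2$, show the inhomogeneous term $R_g$ expands in the $\psi_i$ basis with the appropriate degree bound, and invoke Theorem~\ref{AlgebraicSeriesTheorem} together with a partial-fraction/local analysis to rule out the spurious singularities and pin down the coefficients. One small correction: the auxiliary identity should read $z=\tfrac{1}{432}\left(1-3\eta^2+2\eta^3\right)$ rather than $z^2$, which still yields that multiplication by $z^2$ is polynomial in $\eta$, so your argument is unaffected.
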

\begin{proof}
	Using Theorem~\ref{GaoTheorem}, Corollary~\ref{MasterEquationGenusRefined} can be rearranged so that when written in the $\psi_i$ basis we have
	\[
		\left\{ \frac{4}{3} D^2 \right. \left. - 4(g\psi_2 + \psi_0 - 4)D - \frac{4}{3}(2\psi_4 + 9g(3-g)\psi_2 + 12\psi_0 - 32) \right\} \Lt_g = R_g(s),
	\]
	where $R_g(s)$ depends only on $\Lt_i$, $i = 0 \cdots g-1$. That $\Lt_g = \sum_{i=0}^{5g-8} \mu_g(i) \psi_i$ then follows by induction by directly computing
	the base case
		\[ \Lt_2 = \frac{23}{12} \psi_0 - 3 \psi_1 + \frac{13}{12} \psi_2, \]
	and then showing that
		\[ R_g(s) = \sum_{i=0}^{5g} r_g(i) \psi_i, \]
	for $g \geq 3$. Proving that $R_g(s)$ has this structure is a lengthy yet straightforward computation. One writes each of the parts in Corollary~\ref{MasterEquationGenusRefined}
	in terms of the $\psi_i$ basis and then using the identities in Proposition~\ref{PsiBasis} shows that $R_g(s)$ has an expansion in the $\psi_i$ basis and that the appropriate
	degree bounds hold. The result then follows using Theorem~\ref{AlgebraicSeriesTheorem}, expanding by partial fractions and then checking coefficients.
\end{proof}

\section{Asymptotic Behaviour}

In this final section we will consider some of the implications of the results above. In particular, we examine the leading coefficient in the $\psi$ basis
expansion of $\Lt_g(z)$ for each $g$. Using Theorem~\ref{MasterEquationGenusRefined} we can determine a recursion which the leading coefficients satisfy. Since the
asymptotic behaviour of each element in the $\psi$ basis can be determined, this allows us to determine the asymptotic behaviour of the series $\Lt_g(z)$ itself.
In what follows we will let \[ \al_g = \mu_g(5g - 8), \] i.e., $\al_g$ is the leading coefficient of $\Lt_g$ in the $\psi$ basis. Let
	\[ \be_k = \begin{cases} \frac{(5k-6)}{\sqrt{3}} \al_g, & \mbox{ if } k \mbox{ is odd}, \\ (5k-6) \al_g, & \mbox{ if } k \mbox{ is even}.\end{cases} \]

\begin{thm}
	For all $g > 1$,
		\begin{align*}
			\frac{8}{3} (g-1) \be_g	&= \frac{1}{162}(5g-8)(5g-12)(5g-6)\be_{g-2} \\
									&\qquad - \frac{1}{3^6 5!} (5g-6)(5g-10)(5g-14)(5g-18)(5g-22)\be_{g-4} \\
								    &\qquad + \frac{8(5g-6)}{432} \sum_{i=1}^{g-1} \be_i \be_{g-i} - \frac{(5g-6)}{2^3 3^6} \sum_{i=2}^{g-1} (5i-8)(5i-12) \be_{i-2} \be_{g-i} \\
								    &\qquad - \frac{1}{2^3 3^5} \sum_{i=1}^{g-1} \sum_{j=1}^{g-i-1} (5i-2) \be_i \be_j \be_{g-i-j}. 
		\end{align*}
\end{thm}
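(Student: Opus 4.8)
The plan is to extract, from the cubic recurrence of Corollary~\ref{MasterEquationGenusRefined}, the relation governing the single most singular coefficient of each $\Lt_g$ as $\eta\to0$. By Theorem~\ref{StructureTheorem} we have $\Lt_g=\sum_{i=0}^{5g-8}\mu_g(i)\psi_i$, so the dominant singular part of $\Lt_g$ is carried by the top element $\psi_{5g-8}$, whose coefficient is $\al_g=\mu_g(5g-8)$. The first step is to record the analytic data at the branch point $s_c=(3-\sqrt3)/6$ of $s(z)$, where $\eta=0$: there $z_c=1/(12\sqrt3)$ and $1-2s_c=1/\sqrt3$, and since $z=\tfrac12 s(1-s)(1-2s)$ gives $dz/ds=\eta/2$, one obtains the two rules that drive everything, $D\,\eta^{-a}\sim 2a\,\eta^{-a-2}$ and $\psi_i\sim c_i\,\eta^{-(i+2)/2}$ with $c_i=1$ for $i$ even and $c_i=1/\sqrt3$ for $i$ odd. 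With this dictionary the substitution defining $\be_g$ becomes transparent: $\be_g=(5g-6)\,c_{5g-8}\,\al_g$ is exactly $(5g-6)$ times the leading singular coefficient of $\Lt_g$, the factor $c_{5g-8}$ absorbing the odd-index $1/\sqrt3$ and $(5g-6)=2\cdot\tfrac{5g-6}{2}$ being the multiplier produced when $D$ acts once on $\eta^{-(5g-6)/2}$; equivalently, $\be_g$ is the leading coefficient of $(D+4)T_g\approx D\Lt_g$.

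Next I would substitute these leading forms into Corollary~\ref{MasterEquationGenusRefined}, using Proposition~\ref{PsiBasis} to keep the book-keeping exact (it supplies the leading coefficient $(i+2)$ of $D\psi_i$ and $\chi(i,j)$ of $\psi_i\psi_j$). The terms that are linear in $\Lt_g$ collapse, once $\Lt_0,\Lt_1$ are inserted from Theorem~\ref{GaoTheorem}, into the reduced second-order operator already isolated in the proof of Theorem~\ref{StructureTheorem}; its contribution to the most singular power is $\bigl(\tfrac{16}{3}\rho_g(\rho_g+2)-8g\rho_g\bigr)c_{5g-8}\al_g$ with $\rho_g=(5g-6)/2$, and this simplifies precisely to $\tfrac83(g-1)\be_g$, the left-hand side of the claim. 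The remaining terms then generate the right-hand side according to their algebraic type: the two linear operators carrying the prefactors $z^2$ and $z^4$ shift the genus by $2$ and $4$ and, through the composition of $D$-actions (each contributing a factor $2\times(\text{current exponent})$, i.e.\ linear factors of the form $5g-c$), produce the $\be_{g-2}$ and $\be_{g-4}$ terms with their cubic and quintic polynomial coefficients in $g$; the product $\{(D+4)T_i\}\{(D+4)T_{g-i}\}$ and its companion paired against $V_k$ produce the two quadratic convolutions $\sum\be_i\be_{g-i}$ and $\sum\be_{i-2}\be_{g-i}$; and the genuinely cubic term $V_k\cdot\{(D+4)T_i\}\{(D+4)T_{g-k-i}\}$ produces the triple convolution $\sum\sum(5i-2)\be_i\be_j\be_{g-i-j}$.

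The hard part, and where I expect most of the work to lie, is that the linear, quadratic, and cubic structures do not all sit at the same power of $\eta$ in the per-genus equation, so assembling them into one closed recursion for the top coefficients requires combining contributions from several consecutive singular orders while controlling the subleading $\psi$-structure. Two features make this delicate. First are the boundary indices $i=0$ (and the $t=0$ summand of $V_k$): although $\Lt_0$ is regular at $\eta=0$, the operators $D$ and $\del_z$ turn it into a genuine pole, so these terms are anomalously singular and their highest parts must be shown to cancel across the two sides — this cancellation is exactly the reorganisation that yields the reduced operator of Theorem~\ref{StructureTheorem}. Second is the parity book-keeping: the odd-index factor $1-2s_c=1/\sqrt3$ and the case split in $\chi(i,j)$ must, after everything is re-expressed through $\be$, collapse to rational numbers, which is why the stated coefficients have only powers of $2$ and $3$ in their denominators.

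Finally, the explicit combinatorial weights $2^{k-t+2}(1+(-1)^{k-t})$ and the generalised binomials $\binom{2-t}{\cdot}$ in $V_k$ must be summed; the parity selector $(1+(-1)^{k-t})$ annihilates half the summands and, together with the derivative count, pins down the exact index shifts and constants in the convolution sums. Verifying that all of these ingredients combine into the displayed rational constants is lengthy but mechanical, of the same character as the computation of $R_g(s)$ in Theorem~\ref{StructureTheorem}, and the induction can be anchored by checking the base cases $g=2,3$ directly against the known $\Lt_2$.
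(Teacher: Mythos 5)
Your proposal follows essentially the same route as the paper's proof: it extracts the top $\psi$-basis coefficient (equivalently, the leading $\eta$-singularity, which coincides with it by Proposition~\ref{PsiBasis}) from Corollary~\ref{MasterEquationGenusRefined}, uses the reduced second-order operator isolated in the proof of Theorem~\ref{StructureTheorem} to produce the left-hand side $\frac{8}{3}(g-1)\be_g$, and obtains the convolution terms together with the $\sqrt{3}$ parity bookkeeping via the same substitution $w_k \mapsto \be_k$, deferring the same lengthy mechanical computation that the paper also leaves unwritten. The only slip is attributing the $\be_{g-4}$ term to a ``$z^4$-prefactor'' linear operator, which does not exist in Corollary~\ref{MasterEquationGenusRefined}; that term actually arises from the cross terms $V_k\cdot 2z^2(D+12)(D+8)(D+4)T_{g-k-2}$, but this misattribution does not affect the viability of the plan.
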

\begin{proof}
	Using the notation in the proof of Theorem~\ref{StructureTheorem}, we have
		\[ p_g(D) \Lt_g = \sum_{i=0}^{5g} r_g(i) \psi_i, \]
	where
		\[ p_g(D) = \frac{4}{3} D^2 - 4(g\psi_2 + \psi_0 - 4)D - \frac{4}{3}(2\psi_4 + 9g(3-g) \psi_2 + 12\psi_0 - 32). \]
	Now,
		\begin{align*}
			[\psi_{5g}] p_g(D) \Lt_g	%&= \frac{4}{3} \delta^2(\Lt_g; 5g) - 4g\delta(\Lt_g;5g-4) \\
										&= \frac{4}{3} (5g-2)(5g-6) \al_g - 4g(5g-6)\al_g \\
										&= \frac{8}{3}(5g-6)(g-1)\al_g.
		\end{align*}
	So,
		\[ \frac{8}{3}(5g-6)(g-1)\al_g = r_g(5g). \]
	As in the proof of Theorem~\ref{StructureTheorem}, if we write out each term in Corollary~\ref{MasterEquationGenusRefined} in terms of the $\psi_i$
	basis and then simplify using the identities in Proposition~\ref{PsiBasis}, then after a very lengthy computation we can show that if we let $w_k = (5k-6)\al_k$ then
		\begin{align*}
			\frac{8}{3}(g-1)w_g	&= r_g(5g), \\
								&= \frac{1}{162} (5g-8)(5g-12)(5g-6)w_{g-2} \\
								& \qquad - \frac{1}{3^6 5!} (5g-6)(5g-10)(5g-14)(5g-18)(5g-22)w_{g-4} \\
								& \qquad + \frac{(5g-6)}{54} \sum_{i=1}^{g-1} \chi(i,g-i) w_i w_{g-i} \\
								& \qquad - \frac{(5g-6)}{2^3 3^6} \sum_{i=2}^{g-1} \chi(i,g-i) (5i-8)(5i-12) w_{i-2} w_{g-i} \\
								& \qquad - \frac{1}{9 \cdot 216} \sum_{i=1}^{g-2} \sum_{j=1}^{g-i-1} (5i-2)\chi(i,g-i)\chi(j,g-i-j)w_i w_j w_{g-i-j},
		\end{align*}
	where here
		\[ \chi(i,j) = \begin{cases} 1, & \mbox{ if either } i \mbox{ or } j \mbox{ is even}, \\ \frac{1}{3}, & \mbox{ if both } i \mbox{ and } j \mbox{ are odd}. \end{cases} \]
	If we make the substitution
		\[ w_k = 	\begin{cases}
						\sqrt{3} \beta_k, & \mbox{ if } k \mbox{ is odd}, \\
						\beta_k, & \mbox{ if } k \mbox{ is even},
					\end{cases} \]
	then it is straightforward to check that if $g$ is even, $\chi(i,g-i)w_i w_{g-i} = \be_i \be_{g-i}$ and $\chi(i,g-i)\chi(j,g-i-j)w_i w_j w_{g-i-j} = be_i \be_j \be_{g-i-j}$ for any $i$ and $j$ and that
	if $g$ is odd then $\chi(i,g-i)w_i w_{g-i} = \sqrt{3} \be_i \be_{g-i}$ and $\chi(i,g-i)\chi(j,g-i-j) w_i w_j w_{g-i-j} = \be_i \be_j \be_{g-i-j}$. So, taking the recursion for $w_g$ above and making the
	substitution, we get (after dividing by $\sqrt{3}$ if $g$ is odd) the desired result.
\end{proof}

Let
	\[ \be(z) = \sum_{n \geq 0} \be_n \left( \frac{3}{2} \right)^n z^{-\left( \frac{5n-2}{4} \right)}, \]
with $\be_0 = -36$ and $\be_1 = \frac{18}{\sqrt{3}}$.

\begin{cor} \label{AsymptoticDE}
	The generating series $\be(z)$ satisfies the differential equation
		\[ - \frac{8}{15} z \be'(z) - \frac{16}{15}\be(z) + \frac{8}{135}\be'''''(z) + \frac{2}{81}\be''(z)\be'(z) + \frac{2}{81}\be'''(z)\be(z) + \frac{1}{486} \be(z)^2\be'(z) = 0. \]
\end{cor}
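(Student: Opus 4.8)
The plan is to prove Corollary~\ref{AsymptoticDE} by reading it as the generating-function form of the recursion for $\be_g$ in the preceding theorem: I would multiply that recursion by $(3/2)^g z^{e_g}$, with $e_g = -(5g-2)/4$, and sum over $g$, translating the coefficient recursion into a differential equation for $\be(z)$. The dictionary is elementary. Writing $\Theta = z\,\tfrac{d}{dz}$, the operator $\Theta$ acts on the $z^{e_g}$-component of $\be(z)$ by the scalar $e_g$, so multiplication of the index by a linear factor becomes an affine function of $\Theta$ (concretely $5g = 2 - 4\Theta$ on that component), and the identity $\tfrac{d^m}{dz^m} = z^{-m}\Theta(\Theta-1)\cdots(\Theta-m+1)$ turns index shifts into derivatives. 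The exponent grid has spacing $5/4$, and the single numerical fact that organizes everything is that $e_g - 5 = e_{g+4}$: a shift of the index by $4$ is produced by exactly the fifth derivative, which is why $\be'''''$ — and no lower lone derivative — appears.

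With this dictionary I would match the nonlinear and top-order terms directly. After shifting the index to $g-4$, the quintic prefactor of the $\be_{g-4}$ term, rewritten through $5g=2-4\Theta$, is $(-4)^{5}\,\Theta(\Theta-1)(\Theta-2)(\Theta-3)(\Theta-4)$, which is exactly $z^{5}\be'''''$; chasing the remaining constant gives $\tfrac{8}{135}$. The convolution weighted by $(5i-8)(5i-12)$ carries a total index shift of $2$; written in operator form it is $z^{-3}(\Theta-2)\bigl[z^{2}\be''\be\bigr]$, which simplifies by $\Theta(z^{k}f)=z^{k}(\Theta+k)f$ to $\tfrac{d}{dz}(\be''\be)=\be'''\be+\be''\be'$, producing the two products $\tfrac{2}{81}\be''\be' + \tfrac{2}{81}\be'''\be$. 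The triple convolution weighted by $(5i-2)$ has unshifted total index and becomes $\tfrac{1}{486}\be^{2}\be'$. These three identifications account for the coefficients $\tfrac{8}{135}$, $\tfrac{2}{81}$, $\tfrac{1}{486}$.

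The remaining and genuinely delicate step is the assembly of the two linear terms $-\tfrac{8}{15}z\be' - \tfrac{16}{15}\be$, and this is where I expect the main obstacle to lie. The pieces that must produce them are the linear term $\tfrac{8}{3}(g-1)\be_g$, the single $\be_{g-2}$ term, and the plain convolution $\sum\be_i\be_{g-i}$, together with the \emph{diagonal} contributions of the product terms — the parts of $\be''\be'$, $\be'''\be$ and $\be^{2}\be'$ in which one factor contributes only its top coefficient. The subtlety is that the restricted ranges of summation in the theorem ($\sum_{i=1}^{g-1}$, $\sum_{i=2}^{g-1}$) differ from the full products by precisely these boundary terms, so taken in isolation the $\be_{g-2}$ term and the plain convolution translate into half-integer powers of $z$ (a ``$\tfrac{5}{2}$-th derivative'' of $\be$, and $z^{\pm1/2}\be^{2}$, $z^{1/2}\be\be'$) that cannot appear alone in the final equation; the content of the corollary is that these combine with the diagonal parts to collapse onto $z\be' = \Theta\be$ and $\be$ with the stated coefficients. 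I would carry this out by separating each full product into its boundary and bulk parts, using the elementary $\Theta$-identities to put everything on the common grid, and finally confirming the five rational constants. As a safeguard one can run the translation in reverse, extracting the coefficient of each power of $z$ from the differential equation and checking it against the recursion; in particular the equation forces $\be_2 = 13/3$, in agreement with $\be_2 = 4\,\mu_2(2) = 4\cdot\tfrac{13}{12}$ read off from the expansion of $\Lt_2$ in Theorem~\ref{StructureTheorem}.
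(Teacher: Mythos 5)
Your proposal is correct and is essentially the paper's own proof: the paper simply asserts that the differential equation is equivalent to the recursion for the $\be_k$ "by extracting coefficients," which is exactly the dictionary you set up (index shifts $\leftrightarrow$ derivatives on the exponent grid of spacing $5/4$, affine prefactors $\leftrightarrow$ functions of $\Theta$), run in the reverse direction. Your explicit treatment of the boundary terms — the $\be_{g-2}$ term and the plain convolution recombining with the $\be_0$-diagonal parts of the nonlinear terms to land back on the grid — is a genuine subtlety the paper leaves implicit, but it is part of the same coefficient-matching computation, not a different route.
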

\begin{proof}
	That the differential equation above is equivalent to the recursion for the coefficients $\be_k$ is easily checked by extracting coefficients.
\end{proof}

From the structure theorem for $\Lt_g$, we know that
	\[ \Lt_g \approx \begin{cases} \frac{\al_g}{\eta^{\frac{5g-6}{2}}}, & \mbox{ if } g \mbox{ is even}, \\ \frac{\al_g(1-2s)}{\eta^{\frac{5g-6}{2}}}, & \mbox{ if } g \mbox{ is odd}. \end{cases} \]
Further, it is a result of Gao\cite{G91} that
	\[ \eta \approx \left( \frac{\sqrt{6}}{2} \right)^{\frac{5g-6}{2}} \left( 1 - 12\sqrt{3}z \right)^{-\frac{(5g-6)}{4}}. \]
Darboux's theorem then implies that
	\[ \ell_g(n) = [z^n]\Lt_g \sim \frac{\be_g}{5g-6} \left( \frac{3}{2} \right)^{\frac{5g-6}{4}} \frac{n^{\frac{5(g-2)}{4}}}{\Gamma\left( \frac{5g-6}{4} \right)}\left( 12\sqrt{3} \right)^n. \]
In terms of the orientable map asymptotics constant $t_g$ and the nonorientable map asymptotics constant $p_g$, it is straightforward to show that
	\[ \be_k = \left( \frac{2}{3} \right)^k 9 \left( (5k-6)v_{k-1} - 4u_{k/2} \right), \]
where
	\begin{align*}
		u_n	&= 2^{n-2} \Gamma\left( \frac{5n-1}{4} \right)t_n, \\
		v_n &= 2^{\frac{n-3}{2}} \Gamma\left( \frac{5n-1}{4} \right) p_{\frac{n+1}{2}},
	\end{align*}
and we adopt the convention that $u_n = 0$ if $n$ is not an integer. In particular, $u_0 = 1$ and $v_0 = -\sqrt{3}$

Let
	\begin{align*}
		u(z)	&= z^{\frac{1}{2}} \sum_{n \geq 0} u_n z^{-\frac{5n}{2}}, \\
		v(z)	&= z^{\frac{1}{4}} \sum_{n \geq 0} v_n z^{-\frac{5n}{2}}.
	\end{align*}
Bender, Richmond and Gao\cite{BRG08} showed that $u(z)$ satisfies the ordinary differential equation
	\[ u^2 - \frac{1}{6} u'' = z. \]
Using the differential equation for $u$ and Corollary~\ref{AsymptoticDE} we can now prove Theorem~\ref{p_gTheorem} (Conjecture 1 of Garoufalidis and Mari\~no\cite{GM10}).
\begin{proof}[Proof of Theorem~\ref{p_gTheorem}]
	Let $v_n$ be the unique sequence of numbers such that the generating series \[v(z) = z^{1/4} \sum_{n \geq 0} v_n z^{-5n/4}\] satisfies the differential equation $2v' - v^2 + 3u = 0$ and with $v_0 = -\sqrt{3}$.
	Let \[ \beta(z) = -36(u(z) + v'(z)).\] Substituting this $\beta$ into the differential equation in Corollary~\ref{AsymptoticDE} and reducing using the relations $u^2 - \frac{1}{6} u'' = z$ and
	$2v' - v^2 + 3u = 0$ we find that $\beta$ is a solution.
	
	Also, checking $\be_0$ and $\be_1$ we see that this is the unique solution which determines the map asymptotics constants.
\end{proof}

\bibliographystyle{plain}
\bibliography{document}

\begin{thebibliography}{10}

\bibitem{AM01}
M.~Adler and P.~van Moerbeke.
\newblock Hermitian, symmetric and symplectic random ensembles: {PDE}s for the
  distribution of the spectrum.
\newblock {\em Ann. of Math. (2)}, 153(1):149--189, 2001.

\bibitem{BC86}
Edward~A. Bender and E.~Rodney Canfield.
\newblock The asymptotic number of rooted maps on a surface.
\newblock {\em J. Combin. Theory Ser. A}, 43(2):244--257, 1986.

\bibitem{BRG08}
Edward~A. Bender, Zhicheng Gao, and L.~Bruce Richmond.
\newblock The map asymptotics constant {$t_g$}.
\newblock {\em Electron. J. Combin.}, 15(1):Research paper 51, 8, 2008.

\bibitem{B65}
William~G. Brown.
\newblock On the existence of square roots in certain rings of power series.
\newblock {\em Math. Ann.}, 158:82--89, 1965.

\bibitem{dB55}
N.~G. de~Bruijn.
\newblock On some multiple integrals involving determinants.
\newblock {\em J. Indian Math. Soc. (N.S.)}, 19:133--151 (1956), 1955.

\bibitem{G91}
Zhicheng Gao.
\newblock The number of rooted triangular maps on a surface.
\newblock {\em J. Combin. Theory Ser. B}, 52(2):236--249, 1991.

\bibitem{G93}
Zhicheng Gao.
\newblock A pattern for the asymptotic number of rooted maps on surfaces.
\newblock {\em J. Combin. Theory Ser. A}, 64(2):246--264, 1993.

\bibitem{GLM08}
Stavros Garoufalidis, Thang T.~Q. L{\^e}, and Marcos Mari{\~n}o.
\newblock Analyticity of the free energy of a closed 3-manifold.
\newblock {\em SIGMA Symmetry Integrability Geom. Methods Appl.}, 4:Paper 080,
  20, 2008.

\bibitem{GM10}
Stavros Garoufalidis and Marcos Mari{\~n}o.
\newblock Universality and asymptotics of graph counting problems in
  non-orientable surfaces.
\newblock {\em J. Combin. Theory Ser. A}, 117(6):715--740, 2010.

\bibitem{GGN13}
I.~P. Goulden, Mathieu Guay-Paquet, and Jonathan Novak.
\newblock Polynomiality of monotone {H}urwitz numbers in higher genera.
\newblock {\em Adv. Math.}, 238:1--23, 2013.

\bibitem{GJ97}
I.~P. Goulden and D.~M. Jackson.
\newblock Maps in locally orientable surfaces and integrals over real symmetric
  surfaces.
\newblock {\em Canad. J. Math.}, 49(5):865--882, 1997.

\bibitem{GJ08}
I.~P. Goulden and D.~M. Jackson.
\newblock The {KP} hierarchy, branched covers, and triangulations.
\newblock {\em Adv. Math.}, 219(3):932--951, 2008.

\bibitem{H04}
Ryogo Hirota.
\newblock {\em The direct method in Soliton theory}, volume 155 of {\em
  Cambridge Tracts in Mathematics}.
\newblock Cambridge University Press, Cambridge, 2004.
\newblock Translated from the 1992 Japanese original and edited by Atsushi
  Nagai, Jon Nimmo and Claire Gilson, With a foreword by Jarmo Hietarinta and
  Nimmo.

\bibitem{K99}
Saburo Kakei.
\newblock Orthogonal and symplectic matrix integrals and coupled {KP}
  hierarchy.
\newblock {\em J. Phys. Soc. Japan}, 68(9):2875--2877, 1999.

\bibitem{M04}
Madan~Lal Mehta.
\newblock {\em Random matrices}, volume 142 of {\em Pure and Applied
  Mathematics (Amsterdam)}.
\newblock Elsevier/Academic Press, Amsterdam, third edition, 2004.

\bibitem{VdL01}
Johan van~de Leur.
\newblock Matrix integrals and the geometry of spinors.
\newblock {\em J. Nonlinear Math. Phys.}, 8(2):288--310, 2001.

\end{thebibliography}
	
\end{document}